\title{Fuss-Schr\"oder Paths and Rooted Plane Forests}
\author{Michael Kural}
\address{Massachusetts Institute of Technology}
\email{mkural@mit.edu}
\begin{document}
\begin{abstract}
We describe a bijection between $(k,k)$-Fuss-Schr\"oder paths of type $\lambda$ and certain rooted plane forests with $n(k+1)+2$ vertices. This yields a recursion which allows us to analytically enumerate the number of large $(k,r)$-Fuss-Schr\"oder paths of type $\lambda$, solving an open question posed by An, Jung, and Kim. Furthermore, we generalize the concept of $(k,r)$-Fuss-Schr\"oder paths to $(k,S)$-Fuss-Schr\"oder paths, in which $r$ can take any value in a given set $S$, and enumerate these paths as well.
\end{abstract}
\maketitle
\section{Introduction}
The $(k,r)$-Fuss-\Sch path was introduced by Eu and Fu (\cite{Eu08}) as a simultaneous generalization of Dyck paths, \Sch paths, and Fuss-Catalan paths. The paths were defined in their relation through enumerative invariants to generalized cluster complexes in a way aimed to specialize to the analogous relation between \Sch paths and simplicial associahedra at $k=1$ (\cite{Fomin05,Fomin03}). Eu and Fu enumerated the total number of small \krfs paths for a fixed $n$ and number of diagonal steps. Recently, An, Jung, and Kim (\cite{An17}) enumerated the number of small \krfs paths by fixed type and posed the same question for large \krfs paths of fixed type. Our central result is finding and proving such a formula.

\Sch paths provide one generalization for Dyck paths (north, east paths from $(0,0)$ to $(n,n)$ remaining nonstrictly above $y=x$) by also allowing diagonal steps. A path is \textit{small} if no diagonal step lies on the line $y=x$ and \textit{large} if no such restriction exists. Alternatively, Fuss-Catalan paths generalize Dyck paths by considering paths from $(0,0)$ to $(n,kn)$ nonstrictly above $y=kx$. In the way Fuss-Catalan paths are a Fuss analogue of Dyck (or Catalan) paths, \krfs paths are a Fuss analogue of \Sch paths in that they may use diagonal steps but also start at $(0,0)$ and end at $(n,kn)$, including both generalizations at once. The type of a given path denotes the partition yielded by consecutive runs of $E$ steps.

Enumeration of each of these paths have been studied in detail, both in total and by type. The number of Dyck paths by type was found by Krewaras (\cite{Kreweras72}) and proven with a direct bijective argument by Liaw (\cite{Liaw98}). An, Eu, and Kim (\cite{An14}) enumerated the number of large \Sch paths by type, and Park and Kim (\cite{Park16}) did the same for small \Sch paths by type. Finally, An, Jung, and Kim (\cite{An17}) enumerated the number of small \krfs paths by type.

In Section \ref{background} we describe the relevant definitions and past results in more detail. In Section \ref{rooted}, we define a certain class of rooted plane forests which are in bijection with \krfs paths. We discuss the recursive nature of these rooted plane forests before translating the relations to generating function identities in Section \ref{enumeration}. Using the Lagrange Inversion Formula, we extract the cardinality of several sets, including: small \krfs paths by type, large \krfs paths by type, and large \krfs paths ending in a diagonal step by type. Finally, in Section \ref{general}, we generalize our results to $(k,S)$-Fuss-\Sch paths, in which a diagonal step may occur at any $r \in S$ for some $S\subset[k]$.

We assume the reader is familiar with the theory of composition of generating functions-for reference, see \cite[Chapter 6]{Stanley99}.

\section{Background}
\begin{defn}
A \textit{Dyck path} of length $n$ is a path consisting of a sequence of east and north steps ($E=(1,0)$ and $N=(0,1)$, respectively) from $(0,0)$ to $(n,n)$ such that the path stays nonstrictly above the line $y = x$. 
\end{defn}
\begin{defn}
The \textit{type} of a Dyck path (or any path containing east steps) is the partition formed by parts with size the maximal consecutive runs of east steps in the path.
\end{defn}
Given a partition $\lambda$, we let $\ell(\lambda)$ denote its length and $|\lambda|$ denote the sum of its parts. We define $m_\lambda=m_{1}(\lambda)!m_{2}(\lambda)!m_{3}(\lambda)!\cdots$, where $m_i(\lambda)$ denotes the number of parts of $\lambda$ equal to $i$. Note that for a Dyck path of length $n$ and type $\lambda$, we have $|\lambda| = n$.
\begin{exmp}
The Dyck path shown in Figure \ref{dyck} has length $n=7$. Its type is $\lambda = (3,2,1,1)$, which satisfies $\ell(\lambda) = 4$, $|\lambda| =3+2+1+1= 7$, and $m_{\lambda} = 2!\cdot 1!\cdot 1! = 2$.
\end{exmp}
\begin{figure}[ht]
\centering
\includegraphics[width=0.3\textwidth]{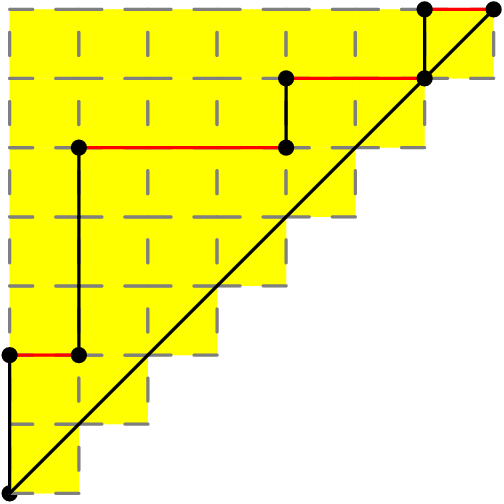}
\caption{The Dyck path $NNENNNEEENEENE$ of length $7$.}
\label{dyck}
\end{figure}
It is well known that the number of Dyck paths of a given length $n$ is $\displaystyle C_n = \frac{1}{n+1}\binom{2n}{n}$. However, one can also restrict to paths of a fixed type:
\begin{thm}[Krewaras, \cite{Kreweras72}]
The number of Dyck paths of type $\lambda$ is 
\[
\frac{n(n-1)\cdots(n-\ell(\lambda)+2)}{m_\lambda}.
\]
\end{thm}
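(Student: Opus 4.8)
The plan is to prove the formula by the cycle lemma, after first removing the denominator $m_\lambda$ through a symmetrization over the parts of $\lambda$. Throughout write $n = |\lambda|$ and $\ell = \ell(\lambda)$, and encode a Dyck path weakly above $y=x$ as a lattice walk of $\pm 1$ steps by recording $+1$ for each north step and $-1$ for each east step, so that ``weakly above the diagonal'' becomes ``all partial sums are nonnegative'' and the type $\lambda$ becomes the multiset of lengths of the maximal runs of $-1$ (east) steps.

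First I would reduce to a count in which the parts of $\lambda$ are distinguishable. Label the $\ell$ east-runs $1,\dots,\ell$ with prescribed sizes $\lambda_1,\dots,\lambda_\ell$, and let $L$ be the number of Dyck paths carrying such a labeling, i.e.\ Dyck paths together with a size-respecting bijection between their maximal east-runs and the labeled parts. Each unlabeled Dyck path of type $\lambda$ receives exactly $m_\lambda$ labelings (permuting labels within blocks of equal parts), so the quantity we want is $L/m_\lambda$. It therefore suffices to prove $L = n(n-1)\cdots(n-\ell+2) = n!/(n-\ell+1)!$.

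To compute $L$, I would contract each maximal east-run to a single down-jump, turning a run of length $a$ into one step of value $-a$. Since $y-x$ only decreases within an east-run, nonnegativity is binding precisely at the ends of runs, so a labeled Dyck path corresponds bijectively to a walk with $n$ unit up-steps and $\ell$ labeled down-jumps of sizes $\lambda_i$, all partial sums nonnegative, total sum $0$, and \emph{no two down-jumps adjacent} (the last condition encoding maximality of the runs). This adjacency condition is the main obstacle, and I would dispose of it by merging each down-jump with the up-step immediately preceding it, which always exists since the walk starts with an up-step and two down-jumps cannot be adjacent. This is a bijection onto walks with $n-\ell$ unlabeled up-steps and $\ell$ labeled steps of value $1-\lambda_i$, with all partial sums nonnegative and total $0$, and now with no adjacency restriction whatsoever.

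Finally I would count these merged walks with the cycle lemma. Temporarily labeling the $n-\ell$ ordinary up-steps as well and prepending one further distinguished up-step produces $n+1$ distinct steps, each of value $\le 1$, with total sum $1$; by the cycle lemma exactly one of the $n+1$ cyclic rotations of each arrangement has all partial sums strictly positive, so there are $(n+1)!/(n+1) = n!$ strictly positive arrangements. Requiring the distinguished up-step to occur first selects a $1/(n-\ell+1)$ fraction of these (the first step of a strictly positive walk must be one of the $n-\ell+1$ interchangeable up-steps), giving $n!/(n-\ell+1)$; dividing out the $(n-\ell)!$ spurious labelings of the ordinary up-steps yields $L = n!/(n-\ell+1)!$, and the formula follows upon dividing by $m_\lambda$. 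The delicate point throughout is the bookkeeping of which objects are labeled—the maximality constraint and the passage between labeled and unlabeled up-steps—since a careless count changes the answer from $n!/(n-\ell+1)!$ to $n!/(n-\ell+1)$.
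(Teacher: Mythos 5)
Your argument is correct, but there is nothing in the paper to compare it against step by step: the paper quotes this theorem as background, attributing it to Kreweras \cite{Kreweras72} (and noting in the introduction that Liaw gave a direct bijective proof), and never proves it. The closest the paper comes is implicit: its machinery specializes to Kreweras' formula, since a Dyck path of type $\lambda$ (so $|\lambda|=n$) is exactly a $(1,1)$-Fuss-Schr\"oder path with no diagonal steps, and setting $k=1$, $|\lambda|=n$ in the small-path count $\frac{1}{kn+1}\binom{n-1}{|\lambda|-1}\binom{kn+1}{\ell(\lambda)}\frac{\ell(\lambda)!}{m_\lambda}$ yields $\frac{n!}{(n-\ell(\lambda)+1)!\,m_\lambda}$, which is the stated product. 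That route goes through the valid-forest bijection and Lagrange inversion; yours is a direct, self-contained cycle-lemma argument, and its delicate points all check out: the $m_\lambda$-fold labeling accounts exactly for permutations of equal parts; contracting maximal east-runs to down-jumps preserves nonnegativity because partial sums within a run are minimized at the run's end, and maximality of runs becomes non-adjacency of jumps; merging each jump with its preceding up-step (which exists, since the walk starts with an up-step and jumps are non-adjacent) removes the adjacency constraint while preserving nonnegativity in both directions; the Dvoretzky--Motzkin count of $(n+1)!/(n+1)=n!$ strictly positive arrangements is valid because all $n+1$ steps are distinct, have value at most $1$, and sum to $1$; and the factor $1/(n-\ell+1)$ is justified because permuting the labels of the $n-\ell+1$ value-$1$ steps preserves positivity, so the distinguished step lands first in exactly that fraction of each orbit. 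What your approach buys is elementarity and independence from generating functions; what the paper's approach buys is uniformity, since one forest-plus-Lagrange-inversion computation simultaneously handles all the Fuss-Schr\"oder refinements of which this theorem is the simplest special case.
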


A slightly more general type of path allows diagonals.
\begin{defn}
A \textit{large \Sch path} of length $n$ is a path with north, east, and diagonal steps (where a diagonal step is $(1,1)$) from $(0,0)$ to $(n,n)$ which stays nonstrictly above the line $y=x$.

A \textit{small \Sch path} of length $n$ is a large \Sch path of length $n$ such that no diagonal step lies on $y=x$.
\end{defn}

The number of large and small \Sch paths by type has much more recently been enumerated.

\begin{thm}[An, Eu, Kim, \cite{An14}]
The number of large \Sch paths of type $\lambda$ is 
\[
\frac{1}{|\lambda|+1}\binom{n}{|\lambda|}\binom{n+1}{\ell(\lambda)}\frac{\ell(\lambda)!}{m_\lambda}.
\]
\end{thm}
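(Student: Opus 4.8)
The plan is to convert the question into a count of decorated lattice excursions and then apply the cycle lemma in the guise of a plane-tree bijection together with Lagrange inversion. Throughout write $e=|\lambda|$, $\ell=\ell(\lambda)$ and $d=n-|\lambda|$; since the path ends at $(n,n)$ it uses $e$ north steps, $e$ east steps grouped into $\ell$ maximal runs, and $d$ diagonal steps. Reading a large \Sch path and recording $N\mapsto{+}1$, $D\mapsto 0$, and a \emph{maximal} east-run of length $j\mapsto{-}j$ produces a lattice path with steps in $\{+1,0\}\cup\{-j:j\ge 1\}$ that starts and ends at height $0$ and stays weakly nonnegative. This is exactly the condition $y\ge x$, because within a descent the height decreases monotonically so its minimum is attained at the end. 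The requirement that the runs be \emph{maximal} becomes the condition that no two down-steps are adjacent. I would weight each north step by $x$, each diagonal by $w$, and a run of length $j$ by $c_j x^{j}$, so that a path of type $\lambda$ is recorded by the monomial $x^{2e}w^{d}\prod_j c_j^{m_j(\lambda)}$.

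Next I would pass to the dual \L{}ukasiewicz picture (reverse and negate the steps, so all down-steps have size $1$) and use the standard bijection with decorated plane trees/forests: a north step becomes a leaf (weight $x$), a diagonal becomes a vertex with a single child (weight $w$), and a run of length $j$ becomes a vertex with $j+1$ children (weight $c_j x^{j}$), while maximality becomes the local condition that the leftmost child of every branching vertex is non-branching. The generating function $Y$ for the associated trees then satisfies a functional equation of Lagrange type, schematically
\[
Y=x+wY+Y\,\mathcal{C}(xY),\qquad \mathcal{C}(u)=\sum_{j\ge 1}c_j u^{j},
\]
before the maximality condition is imposed. The crucial structural point is that the run-weights $\{c_j\}$ enter only through the single series $\mathcal{C}$, since the maximality and nonnegativity constraints care about the \emph{type} of each step (up/flat/down) but not about the \emph{sizes} of the runs.

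With this in hand the extraction factors into two independent pieces. Writing $Y$ (equivalently the full path generating function $F$) as a series in powers of $\mathcal{C}$, the number of paths of type $\lambda$ is the coefficient of $x^{2e}w^{d}\prod_j c_j^{m_j(\lambda)}$. Using the elementary identity
\[
\bigl[\textstyle\prod_{j}c_j^{m_j(\lambda)}\bigr]\,\mathcal{C}(u)^{\ell}=\frac{\ell!}{m_\lambda}\,u^{|\lambda|},\qquad \ell=\ell(\lambda),
\]
peels off the symmetric factor $\ell!/m_\lambda$ and reduces the problem to a coefficient that depends on $\lambda$ only through $e,\ell,d$. I would evaluate this remaining coefficient by Lagrange inversion, expecting it to equal $\frac{1}{e+1}\binom{n}{e}\binom{n+1}{\ell}$; here the Catalan-type prefactor $\frac{1}{e+1}=\frac{1}{n+1}\binom{n+1}{e+1}\big/\binom{n}{e}$ is precisely the output of the cycle lemma underlying Lagrange inversion. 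Multiplying the two pieces gives the claimed formula.

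The main obstacle is that the nonnegativity of the path genuinely depends on the run-lengths and not merely on their number: the per-ordering counts are unequal (for instance the orderings $(2,1)$ and $(1,2)$ of $\lambda=(2,1)$ are realized by different numbers of paths), so no naive symmetrization works, and a generating-function recursion that tried to assume the $c_j$-dependence collapses to $\mathcal{C}$ at the level of paths would be circular. What launders this size-dependence into the single series $\mathcal{C}$ is exactly the passage to the tree model, where the ordering constraint is absorbed into the bijection; establishing that correctly, and in particular incorporating the maximality condition (leftmost child of every branching vertex non-branching) into the functional equation, is the delicate step. I expect setting up this nonstandard equation and then executing the Lagrange inversion to be where essentially all the work lies.
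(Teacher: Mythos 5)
Your overall skeleton---encode the path as a \L{}ukasiewicz-type sequence, pass to plane trees, finish with Lagrange inversion and the extraction identity $\bigl[\prod_{j}c_j^{m_j(\lambda)}\bigr]\mathcal{C}(u)^{\ell}=\frac{\ell!}{m_\lambda}u^{|\lambda|}$---is exactly the toolkit this paper uses (the paper does not prove this cited theorem directly, but it falls out as the $k=1$ case of the Corollary in Section \ref{enumeration}, since $\frac{1}{n}\bigl(\binom{n}{e+1}+\frac{e}{n+1}\binom{n+1}{e+1}\bigr)=\frac{1}{e+1}\binom{n}{e}$ with $e=|\lambda|$). However, your writeup stops short precisely at the steps that constitute the proof, and those steps are not routine. (a) You never derive the functional equation with maximality imposed; doing so forces a two-series system, splitting off $Y_n$, the trees whose root is a leaf or unary: one gets $Y_n = x + wY$ and $Y = Y_n\bigl(1+\mathcal{C}(xY)\bigr)$, not a perturbation of the single equation you wrote. (b) You assert $Y$ is ``equivalently the full path generating function $F$,'' but they are different objects: trees correspond to first-passage descents, while the (reversed) path is an excursion, so $F$ is a sequence-type series, $F = \bigl(1 - w - \sum_{j\ge 1}c_j x^j Y_n Y^{j-1}\bigr)^{-1}$, and one must also check that the no-two-up-steps condition never straddles the boundary between consecutive factors (it does not, because every factor ends in a down-step, but this is part of the proof). (c) The final coefficient is only ``expected'': after the maximality correction, $U=xY$ satisfies $U = x^2 G(U)$ with $G(u) = (1+\mathcal{C}(u))/(1-w(1+\mathcal{C}(u)))$, and $F = H(U)$ with $H$ a \emph{rational} function of $\mathcal{C}(u)$ and $w$; extracting $[x^{2e}w^{n-e}\prod_j c_j^{m_j(\lambda)}]$ from $\frac{1}{e}[u^{e-1}]H'(u)G(u)^{e}$ and showing it equals $\frac{1}{e+1}\binom{n}{e}\binom{n+1}{\ell}\frac{\ell!}{m_\lambda}$ is genuine work that the proposal does not attempt. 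As written, the claimed prefactor is asserted rather than derived, so this is a plan, not a proof.

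The contrast with the paper's route shows why your hard step can be avoided entirely: the paper's bijection (Lemma \ref{forestbijection}) records the common height of each maximal run of $m$ east steps and turns the whole run into a \emph{single} vertex with $m(k+1)$ children, so maximality is absorbed into the encoding rather than imposed as a local constraint. Consequently the functions fed to Lagrange inversion are the polynomials $1+\Theta(x)$, $1+x$, and their product, and Lemma \ref{technical} finishes the extraction in two lines. If you want to complete your argument, either carry out the rational-function inversion above, or switch to a grouped encoding of runs (one vertex per run), which removes the leftmost-child condition at the source and reduces your computation to the paper's.
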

\begin{thm}[Park, Kim, \cite{Park16}]
The number of small \Sch paths of type $\lambda$ is
\[
\frac{1}{n+1}\binom{n-1}{|\lambda|-1}\binom{n+1}{\ell(\lambda)}\frac{\ell(\lambda)!}{m_\lambda}.
\]
\end{thm}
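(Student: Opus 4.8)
The plan is to record the type exactly inside a generating function and then extract the stated count by Lagrange inversion, in the spirit of the later sections. Write $n$ for the length, $e=|\lambda|$ for the number of $E$-steps, $\ell=\ell(\lambda)$ for the number of maximal $E$-runs, and $d=n-e$ for the number of diagonal steps; all of these are forced once $\lambda$ and $n$ are fixed. To retain the full type I would weight each path by $\prod_{i\ge 1}p_i^{\,r_i}$, where $r_i$ counts the $E$-runs of length $i$. Every small \Sch path of type $\lambda$ then carries the single monomial $\prod_i p_i^{m_i(\lambda)}$, so the coefficient of that monomial in the series is exactly the desired number, and the factor $\ell!/m_\lambda$ should fall out of the extraction rather than be inserted by hand.

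First I would set up the first-return decomposition. A small \Sch path meets $y=x$ at a sequence of lattice points, and since no diagonal may lie on $y=x$, each excursion between consecutive contacts must open with $N$ and close with $E$, its interior being an arbitrary elevated large \Sch path. Tracking only the length $u$, this gives $L=1+uL+uL^{2}$ for the large series and $S=1/(1-uL)$ for the small one, whence $S=(1+L)/2$ and the familiar $r_n=2s_n$ as a consistency check. The type-sensitive point is the closing $E$ of an excursion, which merges into the last $E$-run of its interior exactly when that interior ends in an $E$. I would therefore carry a catalytic variable $w$ marking the length of the current trailing $E$-run: closing an excursion turns $w^{j}$ into $p_{j+1}$ and then resets the trailing run to empty for the next block. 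This produces a functional equation for a refined series $L(u,w;p_1,p_2,\dots)$, with the refined small series obtained as a sequence of such excursions.

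The central step is the coefficient extraction. After removing $w$ by the kernel method, the equation for $L$ becomes algebraic in a single auxiliary series, and I would apply Lagrange--B\"urmann to read off the coefficient of $u^{n}\prod_i p_i^{m_i(\lambda)}$. I expect $\binom{n-1}{e-1}$ to arise from interleaving the $e$ $E$-steps with the $d$ diagonals, the ballot-type factor $\tfrac{1}{n+1}\binom{n+1}{\ell}=\tfrac1\ell\binom{n}{\ell-1}$ from placing the $\ell$ runs under the weakly-above-diagonal constraint, and the symmetry factor $\ell!/m_\lambda$ automatically, because the $p_i$ weight an ordered list of runs whose only repetitions are among equal parts. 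As an independent check one can verify that the closed form satisfies $\mathrm{Small}(\lambda)=\tfrac{e(e+1)}{n(n+1)}\,\mathrm{Large}(\lambda)$ against the An--Eu--Kim formula; this identity holds for the stated products and pins down every binomial factor.

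The main obstacle I anticipate is the bookkeeping of $E$-run merging at block boundaries: multiplying block series naively fuses runs incorrectly and corrupts the type, which is exactly why the catalytic variable $w$---or, equivalently, a labelled model of the runs with a final division by $m_\lambda$---is indispensable. A secondary difficulty is computational: carrying the infinite family $\{p_i\}$ through the kernel method and Lagrange inversion while keeping the extraction clean enough for the compact product to survive. As a scaffolding step I would first prove the coarser statement obtained by summing over all $\lambda$ with $e$ and $\ell$ fixed, where $\sum \ell!/m_\lambda$ collapses to the number of compositions $\binom{e-1}{\ell-1}$; matching that simpler identity isolates the Catalan and binomial factors before the full per-type symmetry must be confirmed.
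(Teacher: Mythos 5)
Your skeleton (excursion decomposition, an infinite family of run-marking variables, Lagrange inversion at the end) is sound, and when completed it lands on exactly the algebraic system this paper uses at $k=1$; note the paper does not actually prove this statement itself (it is quoted from Park--Kim), but its forest machinery re-derives the Fuss analogue via Lemma \ref{relations}. However, as written your proposal has a genuine gap at its central step: you never produce the refined functional equation for $L$, and the decomposition you propose to refine, $L=1+uL+uL^{2}$, cannot be refined by a trailing-run variable alone. In the arch factorization $N(\text{interior})E(\text{rest})$, the closing $E$ merges simultaneously with the trailing $E$-run of the interior \emph{and} with the leading $E$-run of the rest, so the product $uL\cdot L$ corrupts types on both sides while $w$ sees only one of them. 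Moreover, the equation you would obtain is not of kernel-method type: the catalytic variable is eliminated by the substitution $w^{j}\mapsto p_{j}$ (infinitely many marks), not by cancelling a polynomial kernel at a root.

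Both defects are repairable, and the repair shows your route converges to the paper's. Decompose a large path as a left-to-right sequence of blocks, each block either a baseline $D$ or an arch: appending a block closes the prefix's trailing run ($w^{j}\mapsto p_{j}$), and the only remaining merge is one-sided, inside an arch ($w^{j}\mapsto w^{j+1}$). Writing $M(u)$ for the image of $L(u,w)$ under $w^{j}\mapsto p_{j}$, the equation reads $L=1+uM(1+wL)$, which is \emph{linear} in $L$; solving $L=(1+uM)/(1-uMw)$ and applying the substitution gives $M=(1+uM)\bigl(1+\Theta(uM)\bigr)$, i.e.\ $C=u(1+C)(1+\Theta(C))$ for $C=uM$, and the same one-sided argument at the small level (your rule $w^{j}\mapsto p_{j+1}$, which is correct there because consecutive excursions cannot merge) gives the small-path type series $1+\Theta(C)$. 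These are precisely the relations of Lemma \ref{relations} with $k=1$. Lagrange inversion with $H(x)=1+\Theta(x)$, $G(x)=(1+x)(1+\Theta(x))$, together with Lemma \ref{technical} for $a=b=n$, then gives for the coefficient of the monomial recording $\lambda$ the value $\frac{1}{n}\cdot\frac{|\lambda|}{n+1}\binom{n}{|\lambda|}\binom{n+1}{\ell(\lambda)}\frac{\ell(\lambda)!}{m_\lambda}$, and the identity $|\lambda|\binom{n}{|\lambda|}=n\binom{n-1}{|\lambda|-1}$ turns this into the stated formula. Your consistency check against the An--Eu--Kim large-path count is correct arithmetic, and the factor $\ell(\lambda)!/m_\lambda$ does emerge automatically exactly as in Lemma \ref{thetapower}; so once the decomposition is fixed as above, the plan goes through.
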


We may also generalize Dyck paths in an entirely different sense. Instead of altering the types of steps, we alter the region that contains the path.

\begin{defn}
A \textit{$k$-Fuss-Catalan path} of length $n$ is a path with north and east steps from $(0,0)$ to $(n,kn)$ which stays nonstrictly above the line $y = kx$.
\end{defn}
The number of Fuss-Catalan paths has also been counted by type.
\begin{thm}[Armstrong, \cite{Armstrong09}]
The number of $k$-Fuss-Catalan paths of type $\lambda$ is
\[
\frac{(kn)!}{m_\lambda\cdot (kn+1-\ell(\lambda))!}.
\]
\end{thm}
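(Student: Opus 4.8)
The plan is to prove this by a cycle-lemma (Dvoretzky--Motzkin) argument, counting paths of type $\lambda$ directly after a convenient \emph{condensation} of each path into indivisible blocks. The starting observation is that any $k$-Fuss-Catalan path must begin with a north step (an initial east step would land at $(1,0)$, strictly below $y=kx$ for $k\ge 1$), and that consecutive maximal east-runs must be separated by at least one north step. Hence \emph{every} maximal east-run is immediately preceded by a north step. First I would peel off, for each of the $\ell=\ell(\lambda)$ east-runs, the single north step directly to its left, forming $\ell$ \emph{gadgets} of the form $NE^{c_i}$, where $(c_1,\dots,c_\ell)$ ranges over the arrangements of the multiset $\lambda$; the remaining $kn-\ell$ north steps I would treat as \emph{free} atoms. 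This decomposition is a bijection between paths of type $\lambda$ and linear arrangements of the $\ell$ gadgets together with the $kn-\ell$ free north steps, since the leading $N$ of each gadget automatically keeps the east-runs separated, so maximality (and hence the type) is preserved on reconstruction.

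Next I would translate the ``weakly above $y=kx$'' condition into the language of partial sums, assigning weight $+1$ to a north step and $-k$ to an east step, so that a path is valid iff every partial sum of these weights is nonnegative. The key simplification is that within a gadget $NE^{c_i}$ the running height is minimized at the gadget's right endpoint; consequently a linear arrangement of atoms is valid iff the partial sums taken \emph{at atom boundaries} are all nonnegative. The total weight of all atoms is $kn\cdot(+1)+n\cdot(-k)=0$, so I am counting boundary-nonnegative arrangements of total weight $0$. To apply the cycle lemma in its strict form, I would prepend one additional free north step: this raises every partial sum by $1$ and the total weight to $1$, turning the nonnegativity condition into strict positivity. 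A good augmented sequence is then forced to begin with a free north step (the only atom of positive weight), so prepending is a bijection onto the weakly-valid arrangements.

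I would then invoke the cycle lemma: among the $kn+1$ cyclic rotations of any arrangement of the augmented atom-multiset (which has $kn-\ell+1$ free north steps and $\ell$ gadgets of sizes $\lambda$, each atom of weight $\le 1$, total weight $1$), exactly one has all partial sums positive, and because the total weight is $1$ these $kn+1$ rotations are pairwise distinct. Dividing the number of arrangements of the augmented multiset by $kn+1$ then yields
\[
\frac{1}{kn+1}\cdot\frac{(kn+1)!}{(kn-\ell+1)!\,m_\lambda}=\frac{(kn)!}{m_\lambda\,(kn+1-\ell(\lambda))!},
\]
as claimed, where the multinomial counts the $kn-\ell+1$ indistinguishable free steps and the gadgets grouped by size (contributing $m_\lambda=\prod_i m_i(\lambda)!$). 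As a consistency check, specializing to $k=1$ recovers Kreweras's formula.

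The step I expect to be the main obstacle is making the gadget decomposition genuinely bijective while respecting maximality: I must verify both that each path yields a \emph{unique} choice of peeled north steps and that every arrangement reconstructs to a path of \emph{exactly} type $\lambda$, with no accidental merging or splitting of east-runs. The second delicate point is the distinctness of the $kn+1$ rotations, which relies on the total weight being $1$ so that no arrangement can be periodic; this is precisely why the prepended north step is needed. Once these are in place, the weight-bookkeeping and the multinomial count are routine.
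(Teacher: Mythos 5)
The paper contains no proof of this statement: it is quoted purely as background, with the result attributed to Armstrong \cite{Armstrong09}, so there is no internal argument to compare yours against. Your cycle-lemma proof is correct and self-contained. The decomposition into gadgets $NE^{c_i}$ plus free north steps is sound: a $k$-Fuss-Catalan path must begin with $N$, so every maximal east-run has an immediately preceding north step to peel, and reconstruction cannot merge runs since every gadget leads with $N$; the reduction of the height condition to nonnegativity at atom boundaries is valid because the running height inside $NE^{c}$ is minimized at its right endpoint; and the augmentation by one extra north step works because each gadget has weight $1-ck\le 0$, so only a free $N$ can start a strictly positive sequence, while the total weight $1$ rules out periodicity and forces every rotation orbit to have full size $kn+1$. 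Dividing the multinomial count $\frac{(kn+1)!}{(kn+1-\ell(\lambda))!\,m_\lambda}$ of augmented arrangements by $kn+1$ then gives exactly $\frac{(kn)!}{m_\lambda\,(kn+1-\ell(\lambda))!}$. Comparing methodologies: the machinery this paper actually develops would reprove the same formula very differently, by encoding paths as rooted plane trees/forests and applying Lagrange inversion to the system $A(x)=1+\Theta(C(x))$, $B(x)=1+C(x)$, $C(x)=xA(x)^kB(x)$ (indeed the theorem is the "no diagonal steps" shadow of Lemma \ref{relations} and its consequences). Your Dvoretzky--Motzkin argument is more elementary and fully bijective, which is a genuine advantage for this special case; what it gives up is exactly what the paper's generating-function approach buys, namely uniform handling of the Schr\"oder-type refinements (diagonal steps restricted to prescribed residues, small versus large paths, paths ending in a diagonal), where the height-dependent step constraints break the translation invariance on which the cycle lemma relies.
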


Finally, we come to our main object of study, the \krfs path. This generalizes all three paths thus far described.
\begin{defn}
For integers $k$ and $r$ such that $1\le r \le k$, a \textit{large \krfs path} of length $n$ is a path from $(0,0)$ to $(n,kn)$ using east, north, and diagonal steps which stays nonstrictly above the line $y=kx$ and such that each diagonal step can only go from the line $y=kj+r-1$ to the line $y= kj+r$ for some $j$.

A \textit{small \krfs path} of length $n$ is a large \krfs path of length $n$ such that no diagonal steps touch the line $y=kx$.

As before, the type of a \krfs path is the partition formed by maximal consecutive runs of east steps.
\end{defn}
In the way $k$-Fuss-Catalan paths are a Fuss analogue for Dyck (or Catalan) paths, \krfs paths are a Fuss analogue for \Sch paths.

An, Jung, and Kim (\cite{An17}) enumerated the number of small \krfs paths of type $\lambda$, which is independent of $r$.
\begin{thm}
The number of small \krfs paths of type $\lambda$ is
\[
\frac{1}{kn+1}\binom{n-1}{|\lambda|-1}\binom{kn+1}{\ell(\lambda)}\frac{\ell(\lambda)!}{m_\lambda}.
\]
\end{thm}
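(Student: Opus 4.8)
The plan is to count directly by decomposing a path according to its vertical levels and then to invoke a cycle-lemma argument to handle the constraint of staying weakly above $y=kx$. First I would fix the basic parameters: since the path runs from $(0,0)$ to $(n,kn)$, the number of diagonal steps is $d=n-|\lambda|$, there are $kn-d$ north steps, and the $|\lambda|$ east steps form $\ell=\ell(\lambda)$ maximal runs whose sizes are the parts of $\lambda$. The key observation driving $r$-independence is that a diagonal may raise $y$ from $i-1$ to $i$ only when $i\equiv r\pmod k$, and in the range $1\le i\le kn$ there are exactly $n$ such heights regardless of $r$; thus the diagonals must be chosen from a pool of $n$ candidate up-steps whose size does not depend on $r$.

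Next I would set up the free count, temporarily ignoring the requirement that the path stay above $y=kx$. List the $kn$ up-steps (norths and diagonals) in order of increasing height; they create $kn+1$ gaps (before the first, between consecutive up-steps, and after the last). Because two east runs placed in the same gap would merge into a single maximal run, each gap holds at most one of the $\ell$ runs, giving $\binom{kn+1}{\ell}$ choices of gaps and $\frac{\ell!}{m_\lambda}$ ways to distribute the multiset of part-sizes among the chosen gaps. Selecting which of the $n$ candidate up-steps are diagonals contributes a factor of the form $\binom{n}{d}$ before constraints. Together these recover the factors $\binom{kn+1}{\ell(\lambda)}$ and $\frac{\ell(\lambda)!}{m_\lambda}$ verbatim.

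Finally I would impose the condition $y\ge kx$ via the cycle lemma. Reading the increments of $y-kx$ along the path (north $+1$, east $-k$, diagonal $1-k$) and augmenting the configuration with a single phantom up-step, I would argue that each cyclic class has size $kn+1$ and contains exactly one representative that stays weakly above the line and is small (no diagonal starting on $y=kx$). This yields the prefactor $\frac{1}{kn+1}$ and, crucially, removes one candidate diagonal position, converting $\binom{n}{d}$ into $\binom{n-1}{d}=\binom{n-1}{|\lambda|-1}$. As consistency checks, the case $d=0$ collapses to Armstrong's Fuss-Catalan count $\frac{(kn)!}{m_\lambda (kn+1-\ell)!}$, and $k=1$ recovers the Park-Kim small \Sch formula.

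The hard part will be making the cycle-lemma step fully rigorous: one must define the cyclic action on these level-configurations so that it acts freely with orbits of size exactly $kn+1$, verify that precisely one representative per orbit is both weakly above $y=kx$ and small, and confirm that the mod-$k$ structure of the diagonal candidate heights is preserved under rotation (so that the $r$-independence and the exact $\binom{n}{d}\to\binom{n-1}{d}$ reduction are genuine rather than accidental). The most delicate bookkeeping lies in reconciling the strict inequality of the small condition with the phantom step that produces the $-1$ in $n-1$; I would isolate this in a separate lemma and check it against the two boundary cases above.
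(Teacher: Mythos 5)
Your free count is set up correctly: placing the $\ell(\lambda)$ maximal east runs into the $kn+1$ gaps between consecutive up-steps gives $\binom{kn+1}{\ell(\lambda)}\frac{\ell(\lambda)!}{m_\lambda}$, and there are indeed exactly $n$ candidate heights for diagonals, independent of $r$. But the cycle-lemma step, which you yourself isolate as the hard part, is not a deferred detail --- as stated it is false, and no small repair fixes it. If every cyclic class had size $kn+1$ and contained exactly one representative that is weakly above $y=kx$ and small, the count would be $\frac{1}{kn+1}\binom{n}{d}\binom{kn+1}{\ell(\lambda)}\frac{\ell(\lambda)!}{m_\lambda}$, which exceeds the target by the factor $n/|\lambda|$; a uniform ``one good element per orbit'' claim cannot simultaneously produce the reduction $\binom{n}{d}\to\binom{n-1}{d}$, and attributing that reduction to the phantom step is a restatement of what is needed, not a mechanism. (The reduction has a direct explanation you should use instead: the lowest candidate diagonal is never usable, since for $r=k$ a diagonal into height $k$ must start at $(0,k-1)$ and so lands on $y=kx$, violating smallness, while for $r<k$ a diagonal into height $r$ lands strictly below the line.) Even after restricting to the usable $n-1$ candidates, the orbit claim fails. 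Take $k=1$, $n=3$, $\lambda=(1,1)$, so $d=1$: fixing the diagonal at height $2$ and rotating the two unit east runs around the four gaps, the orbit $\bigl\{\{0,2\},\{1,3\}\bigr\}$ (gaps labelled by height) has size $2$, not $kn+1=4$, and contains \emph{no} valid small path, while the diagonal-at-height-$3$ pattern contributes two paths split across two orbits; the correct total $3$ emerges only from the non-uniform contributions $(0,1,2)$ summed over diagonal patterns. (Note also $\binom{3}{1}\binom{4}{2}=18$ is not even divisible by $4$.) So any genuine cyclic action must rotate diagonal choices together with run placements --- and then, for $k\ge 2$, rotation moves diagonals off the heights $\equiv r\pmod k$, which is precisely the obstruction you postponed.

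For contrast, the paper never invokes cyclic symmetry. It encodes a path by the nondecreasing sequence of heights of its $E$ and $D$ steps, bijects such sequences with rooted plane forests (Lemma \ref{forestbijection}), turns the recursive structure of those forests into the system of Lemma \ref{relations}, namely $A=1+\Theta(C)$, $B=1+C$, $C=xA^kB$, and extracts coefficients by Lagrange inversion; the present formula is the coefficient of $A(x)$, with the general-$r$ case and its $r$-independence following from the $(k,S)$ results with $|S|=1$. Lagrange inversion there does exactly the work your cycle lemma cannot: it absorbs the coupling between where the diagonals sit and where the east runs sit, which my counterexample shows is non-uniform across diagonal patterns. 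To complete a direct argument you would need either a configuration space carrying a free $\Z/(kn+1)$-action that preserves the mod-$k$ placement data (none is apparent), or a bijection realizing the factorization of the formula as $\binom{n-1}{n-|\lambda|}$ times the Fuss-Catalan count $\frac{1}{kn+1}\binom{kn+1}{\ell(\lambda)}\frac{\ell(\lambda)!}{m_\lambda}$; the paper's conclusion notes that such bijective proofs remain open.
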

They posed the open question of enumerating large \krfs paths of type $\lambda$, which is our main focus. Note that for $1\le r \le k-1$, all paths are small, so in these cases the number of large \krfs paths of type $\lambda$ has already been enumerated. Thus we only consider the case $r=k$.
\label{background}
\section{A Rooted Plane Forest Bijection}
\label{rooted}
An, Jung, and Kim (\cite{An17}) provide bijections between various \Sch and Dyck paths and noncrossing set partitions and conjecture a similar bijection for large \kkfs paths.

We approach the enumeration problem from the point of view of trees, which we view as a natural way to interpret the data in each path. In particular, we establish a bijection between \kkfs paths of type $\lambda$ and rooted plane forests (or ordered forests). The bijective algorithm is analogous to the algorithm given in \cite{An17} relating \kkfs paths and sparse noncrossing partitions.

\begin{defn}
A \textit{rooted plane tree} (or \textit{ordered tree}) is a rooted tree such that each vertex has a linearly ordered list of children. It can be defined recursively as a root with an ordered list of rooted plane subtrees which are rooted at each child of the root.
\end{defn}
\begin{defn}
A \textit{rooted plane forest} (or \textit{ordered forest}) is a linearly ordered list of rooted plane trees.
\end{defn}
Note that both rooted plane trees and rooted plane forests have natural \textit{pre-order} labelings stemming from the pre-order transversal. (Recall that the labeling algorithm $\pre(r)$ on a root $r$ of a rooted plane tree can be defined by the following steps:
\begin{enumerate}
\item Give $r$ the label $\ell+1$ if $\ell$ is the last label which has been given, or otherwise the label $0$ if no label has been given yet.
\item If $(c_1, c_2,\ldots,c_k)$ is the ordered list of children of $r$, then for $1\le i \le k$, call $\pre(c_i)$.
\end{enumerate}
For a rooted plane forest with an ordered list $(r_1, r_2,\ldots,r_k)$ of plane tree roots, call $\pre(r_1), \pre(r_2), \ldots, \pre(r_k)$ in order.) Our labeling convention is $0$-indexed so that the $V$ vertices are labeled with $0,1,2,\ldots, V-2$, and $ V-1$. 
\begin{defn}
\label{nkforest}
An $(n,k)$-\textit{valid forest} is a rooted plane forest with $2+(k+1)n$ vertices and $(k+1)n$ edges satisfying the following properties:
\begin{enumerate}
\item The forest is the union of two rooted plane trees.
\item If a vertex $v$ has label $s$ with $s\equiv 1\pmod{k+1}$, then $v$ has either $0$ or $k+1$ children.
\item If a vertex $v$ has label $s$ with $s\not \equiv 1\pmod{k+1}$, then $v$ has $m(k+1)$ children for some nonnegative integer $m$.
\end{enumerate}
We say the \textit{type} of an \nk forest is the partition formed by all parts $m$ (with repetition) corresponding to some $v$ in the latter category with $m(k+1)$ children.
\end{defn}
\begin{exmp}
An example of an \nk forest with pre-order labeling is given in Figure \ref{fig:nkforest}. In this case $n=4$ and $k=2$, and so our distinguished vertices are those labeled $1\pmod{3}$. Since the vertex labeled $0$ has $3$ children and the vertex labeled $5$ has $6$ children, the type of the forest is $\lambda = (2,1)$.
\end{exmp}
\begin{figure}[ht]
\centering
\includegraphics[width=0.5\textwidth]{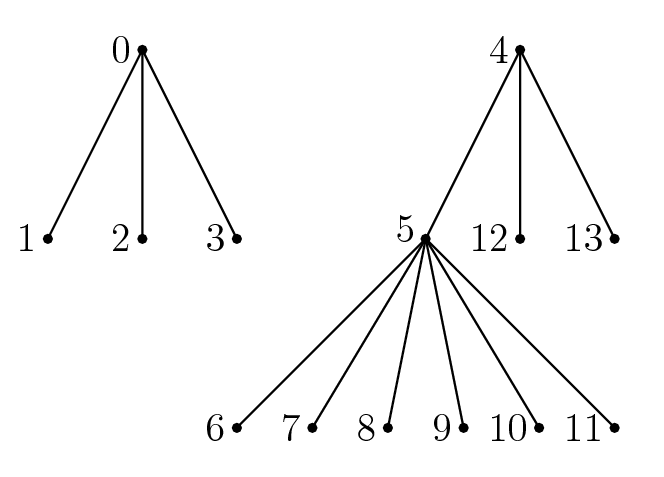}
\caption{A $(4,2)$-valid forest.}
\label{fig:nkforest}
\end{figure}

\begin{lemma}
\label{forestbijection}
There is a type preserving bijection between large \kkfs paths and \nk forests. Furthermore, small paths correspond exactly to \nk forests in which the second tree is a single vertex, and paths ending with a diagonal correspond to \nk forests in which the first tree is a single vertex.
\end{lemma}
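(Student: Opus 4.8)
The plan is to exhibit the bijection explicitly in both directions and verify the two maps are mutually inverse, driven by a numerical dictionary that any such bijection is forced to realize. Writing $d$ for the number of diagonal steps, a \kkfs path of type $\lambda$ has $n-d$ east steps (so $|\lambda|=n-d$), $kn-d$ north steps, and $d$ diagonals. A direct count on the forest side shows that an \nk forest of type $\lambda$ has exactly $n-|\lambda|$ distinguished vertices (those with label $\equiv 1 \pmod{k+1}$) carrying $k+1$ children, exactly $\ell(\lambda)$ non-distinguished vertices of positive out-degree $m_v(k+1)$ with $\sum_v m_v=|\lambda|$, and leaves otherwise. Setting $d=n-|\lambda|$, the matching of $d$ with the number of diagonals and of $\sum_v m_v$ with the number of east steps suggests the correspondence diagonals $\leftrightarrow$ distinguished degree-$(k+1)$ vertices and east-runs of length $m$ $\leftrightarrow$ non-distinguished degree-$m(k+1)$ vertices; building the map to realize this is exactly what will make it type preserving.

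For the forward direction I would scan the path and grow the forest in pre-order, translating each maximal east-run of length $m$ into a non-distinguished vertex with $m(k+1)$ children and each diagonal into a distinguished vertex with $k+1$ children, while the north steps dictate the descents and the creation of leaves. The invariant I would carry is that the current height of the path above $y=kx$ measures the number of open (as yet unfilled) child slots in the partial forest, with tree-depth governed by the vertical slack in blocks of size $k$. This is the structural reason the admissible diagonals are pinned to the lines $y=kj+k-1\to kj+k$ and the out-degree conditions are stated in residues modulo $k+1$.

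The cleanest setting in which to prove correctness is the pre-order out-degree encoding: an \nk forest is equivalent to its sequence of out-degrees read in pre-order, and the conditions of Definition \ref{nkforest} become (i) congruences on the out-degrees determined by position modulo $k+1$ together with (ii) the standard partial-sum condition characterizing the degree code of a plane forest with exactly two trees. I would then prove that the path's geometric constraint, namely remaining nonstrictly above $y=kx$ and taking diagonals only on the allowed lines, corresponds under the dictionary above to precisely (i) and (ii). This equivalence is the main obstacle: one must simultaneously match the ``above the line'' inequality with the nonnegativity of the out-degree partial sums, match the admissible diagonal positions with the mod-$(k+1)$ labeling so that diagonal-vertices are distinguished and east-run-vertices are not, and see that the two returns to the boundary are what force the forest to consist of exactly two trees.

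The inverse is then read off by emitting steps during a pre-order traversal, placing a diagonal at each distinguished degree-$(k+1)$ vertex, an east-run at each non-distinguished internal vertex, and north steps filling the descents, and one checks that the output lands among large \kkfs paths and composes with the forward map to the identity. For the refined statements I would examine the first and last entries of the degree code together with the index where the first tree ends: a trivial first tree means label $0$ is an isolated root, and a trivial second tree means the final label is an isolated distinguished leaf. Using the emission rule I would show the former occurs exactly when the path terminates in a diagonal and the latter exactly when no diagonal touches $y=kx$, i.e.\ when the path is small; determining which extremal feature of the path each of the two roots governs is the delicate bookkeeping in this final step.
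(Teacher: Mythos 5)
Your global dictionary is the correct one, and it is the same one the paper's bijection realizes: diagonal steps correspond to internal vertices with label $\equiv 1 \pmod{k+1}$ carrying $k+1$ children, maximal east-runs of length $m$ correspond to internal vertices with $m(k+1)$ children, and pre-order on the forest corresponds to reading the $E$ and $D$ steps of the path from right to left. The genuine gap is that the invariant on which your whole correctness argument rests --- ``the current height of the path above $y=kx$ equals the number of open child slots in the partial forest'' --- is false. Take the paper's own example with $k=2$, $n=4$: the path $N^5EEDN^2E$, whose forest has root $0$ with three children (coming from the final $E$ step), then the distinguished vertex $4$ with three children (the $D$ step), then vertex $5$ with six children (the run $EE$). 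Growing this forest in pre-order, after creating vertex $0$ there are $3$ open slots while the height above $y=2x$ at the left endpoint of the final $E$ step is $2$; after creating vertex $4$ there are again $3$ open slots while the height at the left endpoint of the $D$ step is $1$. No rescaling fixes this (the first checkpoint matches a factor $\tfrac{k+1}{k}$, the second does not), because the two statistics are structurally different: descending $k$ units of height passes $k+1$ labels (the $k$ lines plus one diagonal row), and a diagonal row that the path does not use produces a leaf with no path step attached to it. For the same reason ``north steps dictate the creation of leaves'' cannot be a bijective statement: this forest has $11$ leaves but the path has only $7$ north steps.

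The invariant that actually makes the lemma work is not a slot count but vertex availability, which is what the paper proves: label lines and diagonal rows from the top so that the $E$/$D$ steps, read right to left, form a nondecreasing sequence $s_1\le\cdots\le s_n$ in which values $\equiv 1\pmod{k+1}$ are unrepeated; then ``the path stays weakly above $y=kx$'' is equivalent to $s_i\le (i-1)(k+1)+1$, which says precisely that at the moment the $i$-th group of children is attached there are already $(i-1)(k+1)+2$ vertices, so the vertex with pre-order label $s_i$ exists. Your plan defers exactly this equivalence (your ``main obstacle''), and with the slot-count invariant in hand you could not carry it out. You also never fix the scan direction, which matters: a left-to-right scan would make the initial run $EE$ the first internal vertex in pre-order, which is wrong. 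Your endpoint claims in the last paragraph are stated correctly (trivial first tree $\Leftrightarrow$ the path ends in a diagonal; trivial second tree $\Leftrightarrow$ no diagonal touches $y=kx$), but they follow from the availability picture --- e.g.\ the second root keeps the largest label until some $s_i$ attains its maximum $(i-1)(k+1)+1$, which happens exactly when a $D$ step touches the diagonal --- and cannot be verified from the construction as you have specified it.
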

\begin{proof}

As noted in \cite{An17}, we may first biject a \kkfs path to a sequence giving the heights of $D$ and $E$ steps. In the rectangle from $(0,0)$ to $(n,kn)$ in which the path is contained, we label the $(n-i)k$th row by $i(k+1)+1$ for $0\le i\le n-1$ and the horizontal line $y= (n-i)k+j$ by $i(k+1)-j$ for $0\le j <k$. (Note that we differ from the notation of \cite{An17} in that we use indices beginning with $0$.)

Reading the labels of $D$s and $E$s in increasing order (equivalently, from right to left) yields a nondecreasing sequence $0\le s_1\le s_2\le \cdots \le s_n$ which completely characterizes the path. Note that $s_i\le (i-1)(k+1)+1$ for all $i$ and that if $s_i\equiv 1\pmod{k+1}$ for some $i$, then the value of $s_i$ is not repeated. These correspond to the conditions that the path stays above the diagonal $y=kx$ and that $D$ steps can only occur between the lines $y=kj+r-1$ and $y=kj+r$, respectively. Conversely, it is not hard to see that from any nondecreasing integer sequence satisfying these two conditions, we may reconstruct its corresponding \kkfs path.

We can read out the type of the corresponding path by considering sequences of $m$ consecutive labels $j$ such that $j\not\equiv 1\pmod{k+1}$: each such $m$ is a part of the partition.

Now we describe a bijection between such sequences $s_i$ and \nk forests. Given a sequence $s_1\le s_2\le \cdots \le s_n$, consider the following algorithm:
\begin{enumerate}
\item Begin with two independent vertices, labeled $0$ and $1$.
\item Successively read elements of the sequence. If the following $m$ terms of the sequence are equal to $j$,
\begin{enumerate}
\item add $m(k+1)$ children to the vertex labeled $j$;
\item maintain the pre-order labeling by incrementing the label of each  $i>j$ by $m(k+1)$ and labeling the children of $j$ with $j+1,j+2,\ldots, j+m(k+1)$.
\end{enumerate}
\end{enumerate}

Note that at each step of the process, a multiple of $k+1$ children are added to a single vertex, which implies that the pre-order labelings $\pmod{k+1}$ are preserved. This implies that the property of a vertex being $1\pmod{k+1}$ or not is preserved throughout the process, and so there are no issues regarding which vertices can gain greater than $(k+1)$ children. Additionally, if a vertex $j$ adds a number of children, the vertices $0,1,2,\ldots ,j$ are never altered again in labels or children. Thus at the end of the process, the labels on vertices with positive numbers of children correspond exactly to the sequence $s_i$, accounting for multiplicity given $m(k+1)$ children.

It then only suffices to show that if 
\[
s_{j-1}<s_j=s_{j+1}=\cdots =s_{j+m-1}\le (j-1)(k+1)+1,
\]
then the vertices from $s_{j-1}+1$ to $(j-1)(k+1)+1$ already exist when the children of $s_j$ are created, and conversely if the sequence $s_j$ is read out in pre-order from the vertices with children in the \nk forest, then $s_j \le (j-1)(k+1)+1$. The first assertion holds because there are $(j-1)(k+1)+2$ vertices in the forest immediately before $s_j$ is read, and in particular the vertices from $s_{j-1}+1$ to $(j-1)(k+1)+1$ are available to be chosen. For the same reason, in the reverse direction the maximum integer $s_j$ could be recorded as is $(j-1)(k+1)+1$.

Finally, it is clear that the type of a \kkfs path corresponds to the type of an \nk forest.

\end{proof}
\begin{figure}[ht]
\centering

\begin{exmp}
An example of the process illustrating the bijection between an \nk forest and a \kkfs path corresponding to the same sequence is carried out in Figure \ref{kkfs-biject} and Figure \ref{nkforest-biject}. In this case, $n=4, k=2$, $\lambda = (2,1)$, and $(s_1,s_2,s_3,s_4) = (0,4,5,5)$. Note that $s_i \le (i-1)(k+1) + 1$ for all $i$, and equality holds for $i=2$, which corresponds to the fact that the $D$ step touches the diagonal $y=2x$.
\end{exmp}

\begin{subfigure}[t]{0.23\textwidth}
\centering
\includegraphics[height=2in]{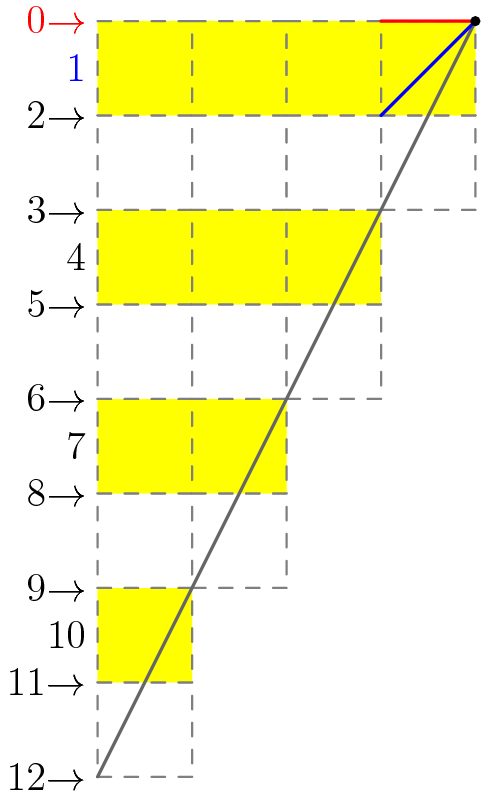}
\end{subfigure}
~
\begin{subfigure}[t]{0.23\textwidth}
\centering
\includegraphics[height = 2in]{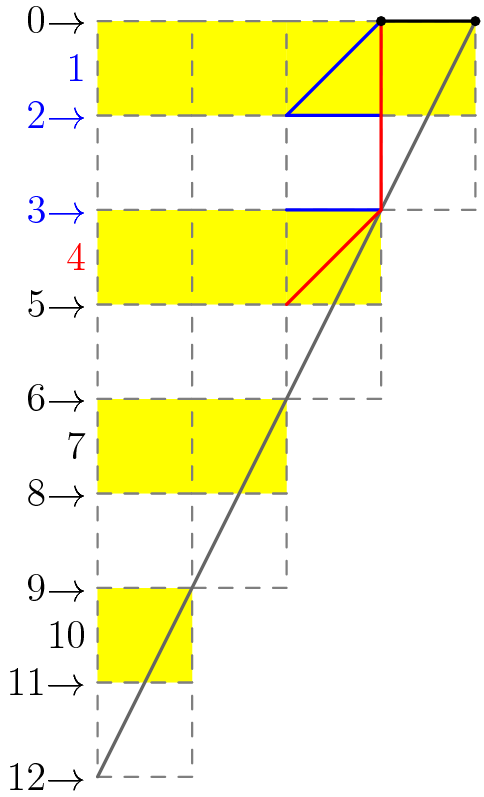}
\end{subfigure}
~
\begin{subfigure}[t]{0.23\textwidth}
\centering
\includegraphics[height =2in]{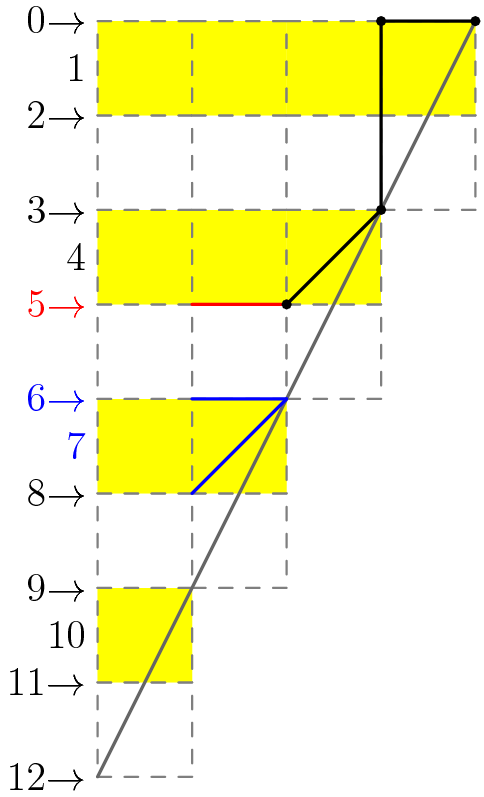}
\end{subfigure}
~
\begin{subfigure}[t]{0.23\textwidth}
\centering
\includegraphics[height = 2in]{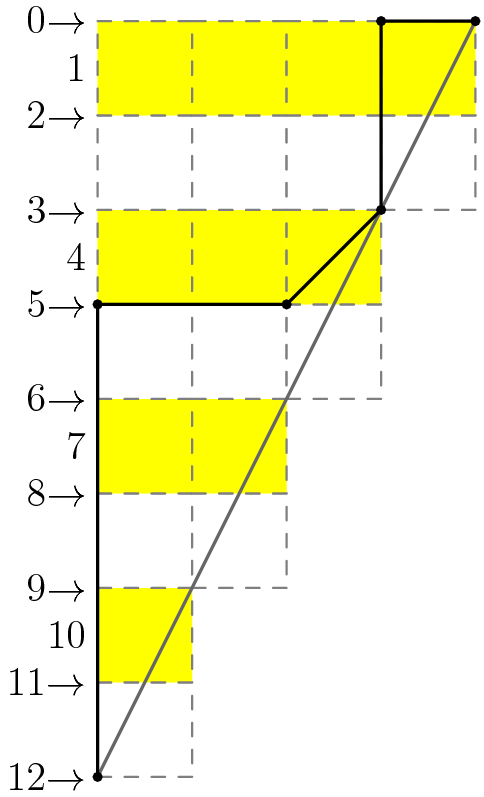}
\end{subfigure}

\caption{The $(2,2)$-Fuss-\Sch path of length $4$ corresponding to the sequence $(s_1, s_2, s_3, s_4) = (0,4,5,5)$.}
\label{kkfs-biject}
\end{figure}
\begin{figure}[ht]
\centering

\begin{subfigure}[t]{0.4\textwidth}
\centering
\includegraphics[width = \textwidth]{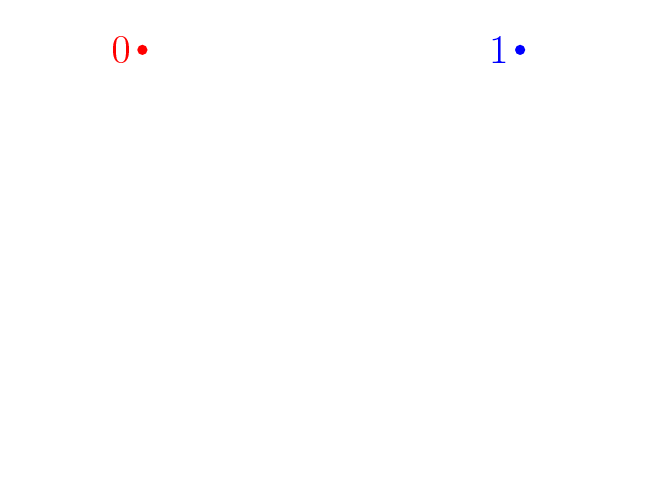}
\end{subfigure}
\begin{subfigure}[t]{0.4\textwidth}
\centering
\includegraphics[width = \textwidth]{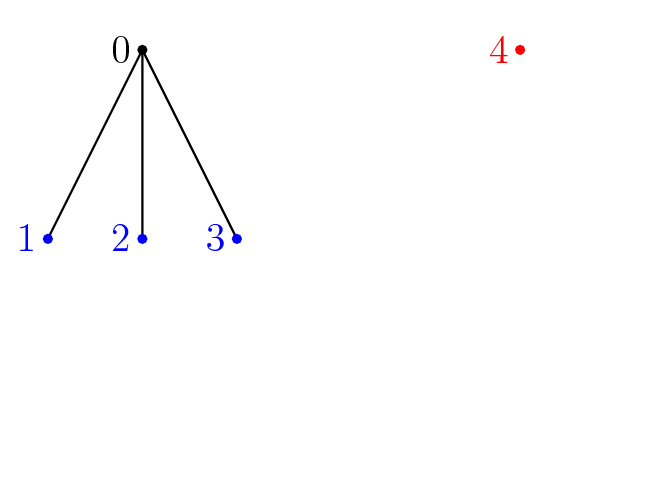}
\end{subfigure}

\begin{subfigure}[t]{0.4\textwidth}
\centering
\includegraphics[width = \textwidth]{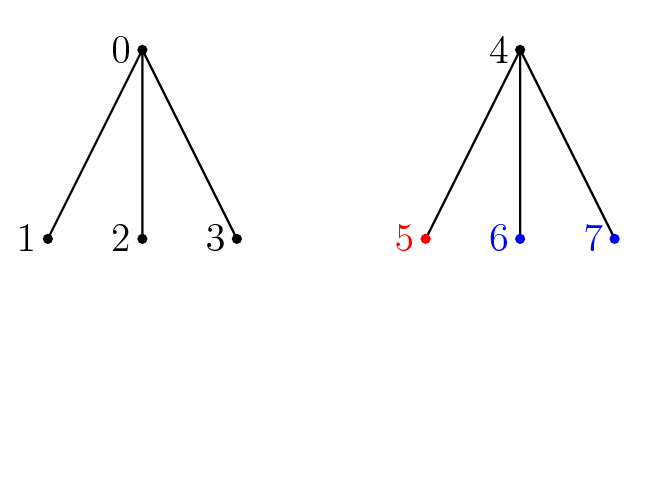}
\end{subfigure}
\begin{subfigure}[t]{0.4\textwidth}
\centering
\includegraphics[width = \textwidth]{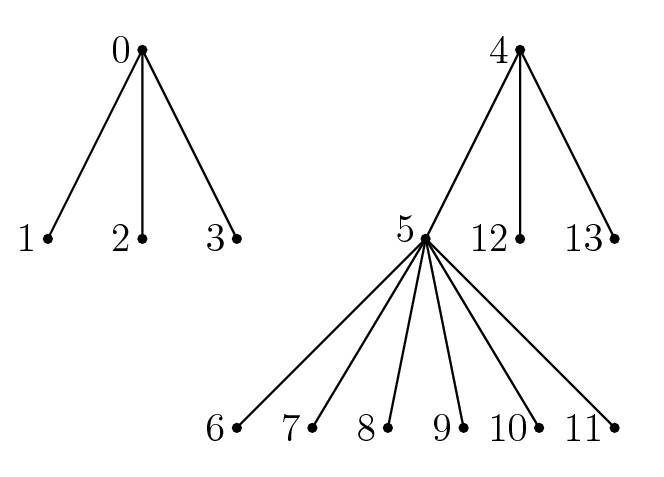}
\end{subfigure}
\caption{The $(4,2)$-valid forest corresponding to the sequence $(s_1,s_2,s_3,s_4) = (0,4,5,5)$.}
\label{nkforest-biject}
\end{figure}

Now to enumerate the number of \nk forests, note that the only significant aspect of the labeling is the reduction of the labels $\pmod{k+1}$. We therefore count the number of trees with root labeled in a specific residue class $\pmod{k+1}$, which is a well-defined notion.
\begin{defn}
A \textit{$j$-rooted \nk tree} is a rooted plane tree which is labeled in pre-order, except in that the labeling begins with $j$ at the root and is only considered $\pmod{k+1}$. Furthermore, the latter two conditions of Definition \ref{nkforest} hold:
\begin{enumerate}
\item If a vertex $v$ has label $s$ with $s \equiv 1 \pmod{k+1}$, then $v$ has either $0$ or $k+1$ children.
\item If a vertex $v$ has label $s$ with $s\not\equiv 1\pmod{k+1}$, then $v$ has $m(k+1)$ children for some nonnegative integer $m$.
\end{enumerate}
The \textit{type} of a $j$-rooted \nk tree is the partition formed by all parts $m$ such that some $v$ in the latter category has $m(k+1)$ children.
\end{defn}

An \nk forest consists of a $0$-rooted $(n_1,k)$-valid tree and a $1$-rooted $(n_2,k)$-valid tree for nonnegative integers $n_1,n_2$ such that $n_1+n_2=n$. The union of the types (as multisets) of the trees yields the type of the forest.

Let $S_j$ denote the set of $j$-rooted \nk trees. We then note that there is a recursive definition for elements of $S_j$. If $j\not \equiv 1\pmod{k+1}$, then for some nonnegative integer $m$, an element $T \in S_j$ consists of a root labeled $j$ and $m(k+1)$ children with subtrees in 
\[
\underbrace{S_{j+1},S_{j+2},\ldots ,S_{k},S_{0},S_{1},\ldots, S_{j}}_{m\text{ times}},
\]
respectively. If $j\equiv 1 \pmod{k+1}$, then an element $T \in S_1$ consists of a root labeled $1$ and either $0$ or $(k+1)$ children with subtrees in 
\[
S_2,S_3,\ldots , S_{k-1},S_{k},S_0,S_1,
\]
respectively. This relation will allow us to enumerate elements of $S_j$ by type recursively.

\section{Enumeration of Trees}
\label{enumeration}

To solve the recursion analytically, we appeal to the classical Lagrange Inversion Formula (see, for example, \cite[p. 42]{Stanley99}), which allows us to find the coefficients of the compositional inverse of a generating function.
\begin{thm}[Lagrange Inversion Formula]
Let $K$ be a field of characteristic $0$, and suppose $f(x),G(x) \in K[[x]]$ such that $G(0)\neq 0$ and $f(x) = xG(f(x))$. Then for any power series $H(x) \in K[[x]]$ and any positive integer $n$,
\[
[x^{n}] H(f(x)) = \frac{1}{n} [x^{n-1}] H'(x) G(x)^{n}.
\]
where the operator $[x^n]$ gives the coefficient of the $x^n$ term of a power series.
\end{thm}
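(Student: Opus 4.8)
The statement to establish is the classical Lagrange Inversion Formula, so the plan is to give the standard proof through the formal residue calculus on the field of formal Laurent series $K((x))$. For $\phi=\sum_m a_m x^m$ write $\operatorname{res}\phi:=a_{-1}=[x^{-1}]\phi$. The target identity is then an equality of residues, since $[x^n]H(f(x))=\operatorname{res}_x\!\big(x^{-n-1}H(f(x))\big)$ and $\tfrac1n[x^{n-1}]H'(x)G(x)^n=\tfrac1n\operatorname{res}_x\!\big(x^{-n}H'(x)G(x)^n\big)$. As a preliminary I would note that $f$ is compositionally invertible: evaluating $f=xG(f)$ at $x=0$ gives $f(0)=0$, and differentiating gives $f'(0)=G(0)\neq 0$, so $\operatorname{ord}f=1$ and $f$ admits a compositional inverse $g\in K[[x]]$ with $g(0)=0$. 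Substituting $x=g(t)$ into $f(x)=xG(f(x))$ yields $t=g(t)G(t)$, hence $g(t)=t/G(t)$, which is legitimate because $G(0)\neq 0$ makes $G$ a unit in $K[[x]]$.

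Next I would record two lemmas about $\operatorname{res}$. (i) \emph{Derivatives have no residue}: $\operatorname{res}(\phi')=0$ for every $\phi\in K((x))$, since $(x^m)'=mx^{m-1}$ never produces an $x^{-1}$ term. Expanding $\operatorname{res}\big((AB)'\big)=0$ then gives the integration-by-parts rule $\operatorname{res}(AB')=-\operatorname{res}(A'B)$. (ii) \emph{Change of variables}: if $u\in K[[x]]$ has $\operatorname{ord}u=1$, then $\operatorname{res}_x\!\big(\psi(u)\,u'\big)=\operatorname{res}_t\psi(t)$ for every $\psi\in K((t))$. I would verify (ii) on monomials $\psi=t^m$: for $m\neq -1$ one has $u^m u'=\tfrac{1}{m+1}(u^{m+1})'$, whose residue is $0$ by (i), matching $\operatorname{res}_t t^m=0$; for $m=-1$, writing $u=x\,h(x)$ with $h(0)\neq 0$ gives $u^{-1}u'=x^{-1}+h'/h$, whose residue is $1$, matching $\operatorname{res}_t t^{-1}=1$. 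Extending by linearity and by continuity in the $x$-adic topology (each fixed coefficient receives contributions from only finitely many monomials) gives (ii) for all Laurent series.

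With these in hand I would assemble the proof. Starting from $[x^n]H(f(x))=\operatorname{res}_x\!\big(x^{-n-1}H(f(x))\big)$, I apply (ii) with $u=f$: since $x=g(f(x))$ and $f'(x)=1/g'(f(x))$ (chain rule on $g(f(x))=x$), the choice $\psi(t)=H(t)\,g'(t)\,g(t)^{-n-1}$ gives $\psi(f(x))f'(x)=x^{-n-1}H(f(x))$, so that
\[
[x^n]H(f(x))=\operatorname{res}_t\!\Big(H(t)\,g'(t)\,g(t)^{-n-1}\Big).
\]
Now $g'(t)g(t)^{-n-1}=-\tfrac1n\big(g(t)^{-n}\big)'$, so integration by parts via (i) yields
\[
[x^n]H(f(x))=-\frac1n\operatorname{res}_t\!\Big(H(t)\big(g(t)^{-n}\big)'\Big)=\frac1n\operatorname{res}_t\!\Big(H'(t)\,g(t)^{-n}\Big).
\]
Finally $g(t)=t/G(t)$ gives $g(t)^{-n}=t^{-n}G(t)^n$, whence $\operatorname{res}_t\big(H'(t)t^{-n}G(t)^n\big)=[t^{n-1}]H'(t)G(t)^n$, which is exactly the asserted formula.

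The main obstacle, and the only step requiring genuine care, is the rigor of the change-of-variables lemma (ii) in the purely formal setting: one must check that substituting the order-$1$ series $u$ into a Laurent series $\psi$ with a pole (here $g(t)^{-n-1}$ has a pole of order $n+1$, so $\psi$ has bounded pole order) produces a well-defined Laurent series in $x$, and that the passage from monomials to arbitrary $\psi$ is justified because each coefficient of $\psi(u)u'$ depends on only finitely many terms of $\psi$. The delicate point is the $m=-1$ monomial, which is the unique source of a nonzero residue and hence of the factor $\tfrac1n$; every other monomial is a derivative and drops out by (i). Once (ii) is in place the remainder is bookkeeping, using $G(0)\neq 0$ to guarantee both the invertibility of $f$ and that of $G$ in $K[[x]]$.
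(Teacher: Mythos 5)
Your proposal is correct, but it cannot be compared to a proof "in the paper" because the paper contains none: the Lagrange Inversion Formula is invoked there as a classical black box, with a pointer to Stanley's \emph{Enumerative Combinatorics} (the citation \cite[p. 42]{Stanley99}), and is then only \emph{applied} to the series $A$, $B$, $C$. What you have written is the standard self-contained residue-calculus proof, and it is sound: the preliminary observations ($f(0)=0$, $f'(0)=G(0)\neq 0$, hence $f$ has a compositional inverse $g$ with $g(t)=t/G(t)$) are right; the two residue lemmas are the correct pillars, and your monomial verification of the change-of-variables lemma correctly isolates $m=-1$ as the only monomial contributing a residue (and hence the source of the factor $\tfrac1n$), while every $m\neq -1$ dies because $u^mu'=\tfrac{1}{m+1}(u^{m+1})'$ is a derivative — a step that genuinely uses characteristic $0$, matching the theorem's hypothesis. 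The chain-rule identity $g'(f(x))f'(x)=1$ makes your substitution $\psi(t)=H(t)g'(t)g(t)^{-n-1}$ do exactly what is claimed, integration by parts converts $H\,(g^{-n})'$ into $-H'g^{-n}$, and $g^{-n}=t^{-n}G^n$ lands on the stated formula. You also flag the one point where formal rigor is needed (substitution of an order-$1$ series into a Laurent series of bounded pole order, and coefficientwise finiteness of the monomial expansion), which is precisely the gap a careless write-up would leave. In short: the paper buys the theorem by citation; your argument supplies the proof that the citation stands in for, in essentially the same form it appears in the standard references.
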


We apply this powerful tool to enumerate the number of Fuss-Schr\"oder paths of given type.
Define the field $K=\C(t_1,t_2,t_3,\ldots)$ given by rational functions of finitely many $t_i$ over $\C$. We write $\T^{\lambda}\in K$ to denote $\T^{\lambda}=t_1^{m_1(\lambda)}t_2^{m_2(\lambda)}t_3^{m_3(\lambda)}\cdots$, where again $m_i(\lambda)$ denotes the number of parts of $\lambda$ of size $i$. We also define the function $\Theta(x)$ and prove some of its properties, which will be useful in our analytic approach.
\begin{defn}
Let $\Theta(x) \in K[[x]]$ be defined as
\[
\Theta(x) = t_1x+t_2x^2+t_3x^3+\cdots .
\]
\end{defn}
\begin{lemma}
\label{thetapower}
We have
\[
\Theta(x)^{m} =\sum_{n\ge 1} \sum_{\substack{|\lambda|=n\\ \ell(\lambda)=m}}\frac{m!}{m_\lambda}   \T^{\lambda} x^{n}
\]
\end{lemma}
\begin{proof}
Each term in $\Theta(x)^{m}$ is the product of an ordered $m$-tuple of terms in the form $t_i x^{i}$. But each term is in the form $\T^{\lambda} x^{n}$ for some partition $\lambda$ induced by the product of the $t_i$ and for some $n$. Note $|\lambda|=n$ and $\ell(\lambda)=m$. Conversely, for each partition $\lambda$, there is an ordered $m$-tuple corresponding to $\lambda$ for each ordered arrangement of the $m_1+m_2+m_3+\cdots = m$ parts, where each of the $m_i$ parts of size $i$ are indistinguishable. It isn't hard to see that there are then $\displaystyle\binom{m}{m_1,m_2,\ldots, m_\lambda}= \frac{m!}{m_\lambda}$ $m$-tuples corresponding to each $\lambda$, and so the identity holds.
\end{proof}
As a direct consequence, we get the following, which will aid in our computation.
\begin{lemma}
\label{technical}
Let $a$ and $b$ be nonnegative integers and let $n\ge 1$. Then
\[
[x^{n-1}](1+\Theta(x))^{a}(1+x)^{b} = \sum_{\lambda} \binom{b}{n-1-|\lambda|} \binom{a}{\ell(\lambda)}\frac{\ell(\lambda)!}{m_\lambda} \T^{\lambda}
\]
and 
\[
[x^{n-1}] \Theta'(x)(1+\Theta(x))^{a}(1+x)^{b} = \sum_{\lambda} \frac{|\lambda|}{a+1} \binom{b}{n-|\lambda|}\binom{a+1}{\ell(\lambda)}\frac{\ell(\lambda)!}{m_\lambda}\T^{\lambda}.
\]
\end{lemma}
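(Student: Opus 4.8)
The plan is to reduce both identities to Lemma \ref{thetapower} together with the ordinary binomial theorem, extracting the relevant coefficient only at the very end.

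First I would expand $(1+\Theta(x))^{a}$ by the binomial theorem as $\sum_{m\ge 0}\binom{a}{m}\Theta(x)^{m}$ and substitute the formula of Lemma \ref{thetapower} for each $\Theta(x)^m$. The only subtlety is the term $m=0$: I would treat it as the contribution of the empty partition $\lambda=\varnothing$, for which $\ell(\varnothing)=0$, $|\varnothing|=0$, $m_{\varnothing}=1$, and $\T^{\varnothing}=1$, so that $\Theta(x)^{0}=1$ fits the common pattern $\binom{a}{\ell(\lambda)}\frac{\ell(\lambda)!}{m_\lambda}\T^{\lambda}x^{|\lambda|}$. With this convention one obtains the uniform expansion $(1+\Theta(x))^{a}=\sum_{\lambda}\binom{a}{\ell(\lambda)}\frac{\ell(\lambda)!}{m_\lambda}\T^{\lambda}x^{|\lambda|}$, the sum running over all partitions (the empty one included). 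Multiplying by $(1+x)^{b}=\sum_{j\ge 0}\binom{b}{j}x^{j}$ and collecting the coefficient of $x^{n-1}$ forces $j=n-1-|\lambda|$, which gives the first identity at once. Note that the resulting sum over $\lambda$ is finite, since only partitions with $|\lambda|\le n-1$ contribute.

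For the second identity I would first observe that $\Theta'(x)(1+\Theta(x))^{a}=\frac{1}{a+1}\frac{d}{dx}(1+\Theta(x))^{a+1}$. Applying the expansion just derived with $a$ replaced by $a+1$ and differentiating termwise yields $\Theta'(x)(1+\Theta(x))^{a}=\frac{1}{a+1}\sum_{\lambda}|\lambda|\binom{a+1}{\ell(\lambda)}\frac{\ell(\lambda)!}{m_\lambda}\T^{\lambda}x^{|\lambda|-1}$, where the empty partition drops out automatically because its term carries the factor $|\lambda|=0$. Multiplying by $(1+x)^{b}$ and extracting the coefficient of $x^{n-1}$ now forces $j=n-|\lambda|$, producing the claimed formula after renaming the summation variable.

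I do not anticipate a genuine obstacle: both statements are formal manipulations of power series whose substantive content is already packaged in Lemma \ref{thetapower}. The only points requiring care are the bookkeeping of the empty partition, so that the constant term $1$ in $(1+\Theta(x))^{a}$ is absorbed uniformly into the partition sum, and the verification that each coefficient extraction yields a finite sum, so that the equalities hold as honest elements of $K$ and not merely formally.
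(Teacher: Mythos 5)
Your proposal is correct and follows essentially the same route as the paper's own proof: both expand $(1+\Theta(x))^{a}$ via Lemma \ref{thetapower} into the partition sum $\sum_{\lambda}\binom{a}{\ell(\lambda)}\frac{\ell(\lambda)!}{m_\lambda}\T^{\lambda}x^{|\lambda|}$ and then extract coefficients against $(1+x)^{b}$, and both handle the second identity by the same device $\Theta'(x)(1+\Theta(x))^{a}=\frac{1}{a+1}\frac{d}{dx}(1+\Theta(x))^{a+1}$. The only difference is that you spell out the bookkeeping (the empty partition absorbing the constant term, finiteness of the sums) that the paper leaves implicit.
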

\begin{proof}
By Lemma \ref{thetapower}, we have
\[
[x^{n-1}] (1+\Theta(x))^{a}(1+x)^{b} = [x^{n-1}](1+x)^{b}\sum_{\lambda}  \binom{a}{\ell(\lambda)}\frac{\ell(\lambda)!}{m_\lambda} \T^{\lambda} x^{|\lambda|}
\]
which implies the first equation.

Similarly, we have
\begin{align*}
[x^{n-1}]\Theta'(x)(1+\Theta(x))^{a}(1+x)^{b}
&=\frac{1}{a+1} (1+x)^{b} \frac{d}{dx} (1+\Theta(x))^{a+1}\\
&= [x^{n-1}] \frac{1}{a+1} (1+x)^{b} \sum_{\lambda} \binom{a+1}{\ell(\lambda)}\frac{\ell(\lambda)!}{m_\lambda}\T^{\lambda} |\lambda| x^{|\lambda|-1}\\
&= \sum_{\lambda}\frac{|\lambda|}{a+1} \binom{b}{n-|\lambda|} \binom{a+1}{\ell(\lambda)}\frac{\ell(\lambda)!}{m_\lambda} \T^{\lambda}
\end{align*}
as desired.
\end{proof}

Having established these computational lemmas, we now return to our enumeration problem.
\begin{defn}
Let $j\le k$ be a nonnegative integer. We define 
\[
A_j(x) = \sum_{T \in S_j} \T^{\lambda}x^{\nu(T)} 
\]
where $\nu(T)$ denotes the $n$ such that $T$ has $(k+1)n$ edges and $(k+1)n+1$ vertices.
\end{defn}
\begin{exmp}
We have 
\[
A_0(x) = 1+t_1x+(t_1+kt_1^2+t_2)x^2+\cdots
\]
and
\[
A_1(x) = 1+x+(1+kt_1)x^2+\cdots .
\]
\end{exmp}
The recursive structure of the elements of each $S_j$ yields the following recursion for $A_j(x)$:
\begin{lemma}
If $j \not \equiv 1\pmod{k+1}$, then
\[
A_j(x) = 1+t_1C(x)+t_2C(x)^2+t_3C(x)^3+\cdots
\]
where $C(x) = xA_0(x)A_1(x)\cdots A_{k}(x)$.

If $j\equiv 1 \pmod{k+1}$, then
\[
A_1(x) = 1 + C(x).
\]

Furthermore, the $\T^{\lambda} x^{n}$ coefficients of $A_0(x),A_1(x),$ and $A_0(x)A_1(x)$ count the number of small \kkfs paths, large \kkfs paths ending in a diagonal, and large \kkfs paths, respectively, each of type $\lambda$.
\end{lemma}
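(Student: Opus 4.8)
The plan is to read off both recursions as a direct translation of the recursive description of $S_j$ given immediately above the lemma, keeping careful track of two statistics: the exponent of $x$, which records $\nu(T)$, and the monomial $\T^\lambda$, which records the type. The three combinatorial interpretations then follow formally from the forest bijection of Lemma \ref{forestbijection} together with the multiplicativity of generating functions under the decomposition of a forest into its two constituent trees.

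First I would establish the recursion for $j\not\equiv 1\pmod{k+1}$. By the recursive structure, an element $T\in S_j$ is a root labeled $j$ together with $m(k+1)$ children for some $m\ge 0$, whose subtrees lie in $S_{j+1},S_{j+2},\ldots,S_k,S_0,\ldots,S_j$. I would group these $m(k+1)$ subtrees into $m$ consecutive blocks, each block cycling once through all residues $0,1,\ldots,k$; hence the generating function of a single block is $A_0(x)A_1(x)\cdots A_k(x)$, independent of $j$, since a block contains exactly one subtree of each residue class. Counting vertices gives $(k+1)\nu(T)+1 = 1 + m(k+1) + (k+1)\sum_i \nu(T_i)$, i.e. $\nu(T)=m+\sum_i\nu(T_i)$, so each block contributes one extra factor of $x$; this is precisely the $x$ in $C(x)=xA_0(x)\cdots A_k(x)$. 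Since $j\not\equiv 1$, the root lies in the second category and contributes a part of size $m$ to the type, tracked by $t_m$ (and contributes no part at all when $m=0$, which corresponds to the bare constant term). Summing over $m\ge 0$ yields $A_j(x)=1+\sum_{m\ge 1}t_mC(x)^m=1+t_1C(x)+t_2C(x)^2+\cdots$. For $j\equiv 1$, the root labeled $1$ lies in the first category, so it contributes \emph{no} part to the type and admits only $0$ or $k+1$ children; these two cases contribute the constant $1$ and a single block $C(x)$ respectively, giving $A_1(x)=1+C(x)$.

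For the interpretations, recall that an \nk forest is an ordered pair consisting of a $0$-rooted and a $1$-rooted \nk tree, and that both $\nu$ and the type are additive over this decomposition (lengths add, types take a multiset union). Hence the bivariate generating function for \nk forests by length and type is exactly $A_0(x)A_1(x)$, and by Lemma \ref{forestbijection} its $\T^\lambda x^n$ coefficient counts large \kkfs paths of type $\lambda$. The same lemma identifies small paths with forests whose second tree is a single vertex; such a tree is the $1$-rooted tree with no children, contributing only the constant term of $A_1(x)$, so that small paths of type $\lambda$ are counted by $[\T^\lambda x^n]A_0(x)$. Dually, paths ending in a diagonal correspond to forests whose first tree is a single vertex, contributing the constant term of $A_0(x)$, and are therefore counted by $[\T^\lambda x^n]A_1(x)$.

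The routine content is the vertex-counting identity and the reordering of block factors; the one point that genuinely requires care is the type bookkeeping at the root. The whole shape of the answer hinges on the fact that a root labeled $\equiv 1\pmod{k+1}$ contributes no part to the type (forcing the clean form $A_1=1+C$), while a root labeled $\not\equiv 1$ contributes the part $m$ weighted by $t_m$; getting these two cases straight, and checking that the empty-child case $m=0$ produces exactly the constant term $1$ rather than a spurious $t_0$, is where I expect the main subtlety to lie.
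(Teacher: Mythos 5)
Your proposal is correct and takes essentially the same route as the paper, whose proof is a one-line appeal to exactly the ingredients you use: the recursive structure of $S_j$, the bijection of Lemma \ref{forestbijection}, and composition/multiplicativity of generating functions. You have simply written out the details (the vertex count giving $\nu(T)=m+\sum_i\nu(T_i)$, the type bookkeeping at the root, and the decomposition of a forest into its $0$-rooted and $1$-rooted trees) that the paper leaves implicit.
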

\begin{proof}
All statements are direct corollaries of the recursive properties of elements of $S_j$ and Lemma \ref{forestbijection} along with the theory of composition of formal power series.
\end{proof}

Now the first equality implies $A_0(x)=A_2(x)=A_3(x)=\cdots = A_{k}(x)$, so we denote this common power series by $A(x)$. We also define $B(x)=A_1(x)$. Then the lemma can be restated in the following way.
\begin{lemma}
\label{relations}
We have
\[
A_{j}(x) = \begin{cases} A(x) &j\not \equiv 1 \pmod{k+1}  \\
B(x) &j \equiv 1 \pmod{k+1} \end{cases}
\]
where $A(x),B(x),$ and $C(x) \in K[[x]]$ satisfy the relations
\begin{align*}
A(x)&=1+\Theta(C(x))\\
B(x)&=1+C(x)\\
C(x)&=x A(x)^{k}B(x).
\end{align*}
\end{lemma}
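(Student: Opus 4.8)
The plan is to read Lemma \ref{relations} directly off the preceding recursion for the $A_j(x)$, the only work being to identify which of these series coincide and to carry out the bookkeeping hidden in the definition of $C(x)$. First I would record the substitution identity $t_1 C(x) + t_2 C(x)^2 + t_3 C(x)^3 + \cdots = \Theta(C(x))$, which is nothing but the definition $\Theta(x) = t_1 x + t_2 x^2 + \cdots$ evaluated at $x = C(x)$. Feeding this into the first case of the preceding lemma shows that for every $j \not\equiv 1 \pmod{k+1}$ one has $A_j(x) = 1 + \Theta(C(x))$. The key point is that $C(x) = x A_0(x) A_1(x) \cdots A_k(x)$ is defined uniformly and does not depend on $j$, so the right-hand side is one and the same series for all such $j$; hence all the $A_j$ with $j \not\equiv 1 \pmod{k+1}$ are equal. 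This justifies introducing $A(x)$ for their common value, and simultaneously yields both the first case of the piecewise description and the relation $A(x) = 1 + \Theta(C(x))$.

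For the remaining two relations I would argue as follows. Setting $B(x) := A_1(x)$, the second case of the preceding lemma gives $B(x) = 1 + C(x)$ at once; since $j = 1$ is the unique index in $\{0, 1, \ldots, k\}$ lying in the residue class $1 \pmod{k+1}$, this also covers the second case of the piecewise description. It then remains only to rewrite $C(x) = x \prod_{j=0}^{k} A_j(x)$. Of the $k+1$ factors indexed by $j = 0, 1, \ldots, k$, exactly one (the factor $A_1(x)$) equals $B(x)$ and the other $k$ equal $A(x)$, so the product collapses to $C(x) = x A(x)^k B(x)$, which is the third relation.

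I expect no genuine obstacle: the entire mathematical content sits in the preceding recursive description of the sets $S_j$, and what is left is purely notational. The two points deserving a moment's attention are the residue count — confirming that precisely one index in the range $0 \le j \le k$ is congruent to $1 \pmod{k+1}$, so that the exponent of $A(x)$ in $C(x)$ is exactly $k$ — and the harmless composition identity $\Theta(C) = \sum_{i \ge 1} t_i C^i$. Accordingly I anticipate the final proof to be a single short paragraph that cites the preceding lemma, renames the coincident series, and substitutes.
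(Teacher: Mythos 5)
Your proposal is correct and matches the paper's own treatment: the paper likewise observes that the right-hand side $1+\Theta(C(x))$ of the first recursion is independent of $j$, so $A_0(x)=A_2(x)=\cdots=A_k(x)=:A(x)$, sets $B(x)=A_1(x)$, and collapses $C(x)=xA_0(x)A_1(x)\cdots A_k(x)$ to $xA(x)^kB(x)$ since $j=1$ is the unique index in $\{0,\ldots,k\}$ congruent to $1\pmod{k+1}$. Nothing is missing; the lemma is, as you anticipate, a purely notational restatement of the preceding recursion.
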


Given these relations, we can use the Lagrange Inversion Formula to solve for the coefficients of $A(x)$, $B(x)$, and $A(x)B(x)$.
\begin{thm}
We have
\begin{align*}
A(x)&= 1+\sum_{n\ge 1}\sum_{\lambda}\frac{1}{n} \frac{1}{kn+1}\binom{n}{|\lambda|}\binom{kn+1}{\ell(\lambda)}\frac{\ell(\lambda)!}{m_\lambda}\T^{\lambda} x^n\\
B(x) &=1+\sum_{n\ge 1}\sum_{\lambda} \frac{1}{n} \binom{n}{|\lambda|+1}\binom{kn}{\ell(\lambda)}\frac{\ell(\lambda)!}{m_\lambda}\T^{\lambda} x^n\\
A(x)B(x)&=1+\sum_{n \ge 1}\sum_{\lambda} \frac{1}{n}\left(\binom{n}{|\lambda|+1}+\frac{|\lambda|}{kn+1}\binom{n+1}{|\lambda|+1}\right)\binom{kn+1}{\ell(\lambda)}\frac{\ell(\lambda)!}{m_\lambda}\T^{\lambda} x^n.
\end{align*}
\end{thm}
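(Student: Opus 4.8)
The plan is to collapse the system of Lemma \ref{relations} into a single functional equation for $C(x)$ that fits the Lagrange Inversion Formula, and then to read off $A$, $B$, and $AB$ as $H(C)$ for suitable choices of $H$. Substituting $A=1+\Theta(C)$ and $B=1+C$ into the third relation eliminates $A$ and $B$ and gives the closed equation $C(x)=x\bigl(1+\Theta(C(x))\bigr)^{k}\bigl(1+C(x)\bigr)$. Setting $G(y)=(1+\Theta(y))^{k}(1+y)$, this reads $C=xG(C)$ with $G(0)=1\neq 0$ and $C(0)=0$, which are exactly the hypotheses of the Lagrange Inversion Formula applied to $f=C$. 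The key observation for the later bookkeeping is that $G(x)^{n}=(1+\Theta(x))^{kn}(1+x)^{n}$, so every coefficient extraction will reduce to an expression Lemma \ref{technical} can evaluate.

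Next I would write each target series in the form $H(C)$. For $A$ take $H(y)=1+\Theta(y)$, so $H'(y)=\Theta'(y)$; for $B$ take $H(y)=1+y$, so $H'(y)=1$; and for $AB$ take $H(y)=(1+\Theta(y))(1+y)$, so that $H'(y)=\Theta'(y)(1+y)+(1+\Theta(y))$. In each case the Lagrange Inversion Formula gives $[x^{n}]H(C)=\tfrac1n[x^{n-1}]H'(x)G(x)^{n}$, and substituting $G(x)^{n}$ turns the right-hand side into a sum of terms of the two shapes $(1+\Theta(x))^{a}(1+x)^{b}$ and $\Theta'(x)(1+\Theta(x))^{a}(1+x)^{b}$ handled by the first and second identities of Lemma \ref{technical}, respectively.

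It then remains to plug in the right parameters. For $B$ one applies the first identity with $(a,b)=(kn,n)$ and simplifies $\binom{n}{n-1-|\lambda|}=\binom{n}{|\lambda|+1}$, which reproduces the stated formula directly. For $A$ one applies the second identity with $(a,b)=(kn,n)$ and simplifies $\binom{n}{n-|\lambda|}=\binom{n}{|\lambda|}$; as a consistency check the resulting coefficient coincides with the An--Jung--Kim small-path count of Section \ref{background} after rewriting $\binom{n-1}{|\lambda|-1}=\tfrac{|\lambda|}{n}\binom{n}{|\lambda|}$, which is reassuring since $A=A_0$ enumerates small paths. For $AB$ the factor $H'(x)G(x)^{n}$ expands as
\[
\Theta'(x)(1+\Theta(x))^{kn}(1+x)^{n+1}+(1+\Theta(x))^{kn+1}(1+x)^{n},
\]
so one applies the second identity with $(a,b)=(kn,n+1)$ to the first summand and the first identity with $(a,b)=(kn+1,n)$ to the second, using $\binom{n+1}{n-|\lambda|}=\binom{n+1}{|\lambda|+1}$ and $\binom{n}{n-1-|\lambda|}=\binom{n}{|\lambda|+1}$.

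The conceptual work is entirely front-loaded into recognizing the $C=xG(C)$ structure; the remaining obstacle is purely organizational, and it lives in the $AB$ computation. There the two summands of $H'$ demand different identities of Lemma \ref{technical} with different parameters, and one must track the index shifts and the $\tfrac{|\lambda|}{kn+1}$ prefactor carefully so that both contributions carry the common factor $\binom{kn+1}{\ell(\lambda)}\tfrac{\ell(\lambda)!}{m_\lambda}\T^{\lambda}$ and collapse into the single bracketed expression $\binom{n}{|\lambda|+1}+\tfrac{|\lambda|}{kn+1}\binom{n+1}{|\lambda|+1}$ of the statement. Once the parameter alignment is done correctly, the three formulas fall out with no further input.
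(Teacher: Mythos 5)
Your proof follows the paper's argument exactly: the same collapse to $C=xG(C)$ with $G(x)=(1+\Theta(x))^{k}(1+x)$, the same three choices of $H$, and the same parameter substitutions into Lemma \ref{technical}, so there is nothing to add on the method. One remark: your (correct) application of the second identity of Lemma \ref{technical} with $(a,b)=(kn,n)$ gives the coefficient $\frac{1}{n}\cdot\frac{|\lambda|}{kn+1}\binom{n}{|\lambda|}=\frac{1}{kn+1}\binom{n-1}{|\lambda|-1}$ for $[x^n]A(x)$, which, as your consistency check with the An--Jung--Kim small-path count confirms, is the right value --- the factor $|\lambda|$ in the numerator is missing from the displayed formula for $A(x)$ in the theorem statement (and from the last line of the paper's own computation), so your derivation has in effect caught a typo rather than introduced an error.
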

\begin{proof}
We use the Lagrange Inversion Formula along with Lemma \ref{relations} and Lemma \ref{technical}. Because
\[
C(x) = xA(x)^{k}B(x) =x (1+\Theta(C(x))^{k}(1+C(x)),
\]
we'll use the Lagrange Inversion Formula with $f(x) = C(x)$, $G(x) =x(1+\Theta(x))^{k}(1+x)$, and varying values of $H(x)$.

Note that $A(x) = H(C(x))$ for $H(x) = 1+\Theta(x)$, so for $n\ge 1$,
\begin{align*}
[x^{n}]A(x) &=\frac{1}{n} [x^{n-1}]\Theta'(x)(1+\Theta(x))^{kn}(1+x)^{n}\\
&=\frac{1}{n}\sum_{\lambda} \frac{1}{kn+1}\binom{n}{|\lambda|}\binom{kn+1}{\ell(\lambda)}\frac{\ell(\lambda)!}{m_\lambda}\T^{\lambda}.
\end{align*}
Now $B(x) = H(C(x))$ for $H(x)=1+x$, so for $n \ge 1$,
\begin{align*}
[x^{n}]B(x) = \frac{1}{n}\sum_{\lambda} \binom{n}{|\lambda|+1} \binom{kn}{\ell(\lambda)}\frac{\ell(\lambda)!}{m_\lambda}\T^{\lambda}.
\end{align*}
Finally, $A(x)B(x) = H(C(x))$ for $H(x) =(1+\Theta(x))(1+x)$, which has derivative $H'(x) = \Theta'(x)(1+x)+(1+\Theta(x))$. Thus
\begin{align*}
[x^{n}]A(x)B(x) &= \frac{1}{n}[x^{n-1}]\Theta'(x)(1+\Theta(x))^{kn}(1+x)^{n+1}+(1+\Theta(x))^{kn+1}(1+x)^{n}\\
&=\sum_{\lambda} \frac{1}{n}\left(\binom{n}{|\lambda|+1}+\frac{|\lambda|}{kn+1}\binom{n+1}{|\lambda|+1}\right)\binom{kn+1}{\ell(\lambda)}\frac{\ell(\lambda)!}{m_\lambda}\T^{\lambda} .
\end{align*}
\end{proof}

In particular, we recover the formula for the number of small \kkfs paths of type $\lambda$ and find two new formulae.
\begin{cor}
For $n \ge 1$, the number of large \kkfs paths of type $\lambda$ is 
\[
\frac{1}{n}\left(\binom{n}{|\lambda|+1}+\frac{|\lambda|}{kn+1}\binom{n+1}{|\lambda|+1}\right)\binom{kn+1}{\ell(\lambda)}\frac{\ell(\lambda)!}{m_\lambda}.
\]

The number of large \kkfs paths of type $\lambda$ which end in a diagonal step is
\[
\frac{1}{n} \binom{n}{|\lambda|+1}\binom{kn}{\ell(\lambda)}\frac{\ell(\lambda)!}{m_\lambda}.
\]
\end{cor}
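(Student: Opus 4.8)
The plan is to obtain both counts as coefficients of generating functions already computed in the preceding theorem, invoking the combinatorial interpretation supplied by the recursion lemma just before Lemma~\ref{relations} (which in turn rests on the bijection of Lemma~\ref{forestbijection}). That recursion lemma identifies the $\T^{\lambda}x^{n}$ coefficient of $A_0(x)A_1(x) = A(x)B(x)$ with the number of large \kkfs paths of type $\lambda$, and the $\T^{\lambda}x^{n}$ coefficient of $A_1(x) = B(x)$ with the number of such paths ending in a diagonal step. Thus the corollary amounts to extracting these two coefficients.

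First I would note that extracting the coefficient of a fixed monomial $\T^{\lambda}$ is well defined and isolates exactly the count for that single partition: the monomials $\T^{\lambda} = t_1^{m_1(\lambda)}t_2^{m_2(\lambda)}\cdots$ are linearly independent over $\C$ as $\lambda$ ranges over partitions, so each generating function in the theorem, written as $\sum_{n\ge 1}\sum_{\lambda}(\cdots)\T^{\lambda}x^{n}$, has its $\T^{\lambda}x^{n}$ coefficient equal to the bracketed summand. This is consistent with the bijective bookkeeping: in the product $A(x)B(x)$ a $0$-rooted tree of type $\lambda_1$ multiplied by a $1$-rooted tree of type $\lambda_2$ contributes $\T^{\lambda_1}\T^{\lambda_2} = \T^{\lambda_1\cup\lambda_2}$, so collecting the coefficient of a fixed $\T^{\lambda}$ counts precisely the \nk forests of combined type $\lambda$, hence the large \kkfs paths of type $\lambda$.

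Then I would simply read off the theorem. Its formula for $A(x)B(x)$ gives, as the $\T^{\lambda}x^{n}$ coefficient,
\[
\frac{1}{n}\left(\binom{n}{|\lambda|+1}+\frac{|\lambda|}{kn+1}\binom{n+1}{|\lambda|+1}\right)\binom{kn+1}{\ell(\lambda)}\frac{\ell(\lambda)!}{m_\lambda},
\]
which is the first claimed count, while its formula for $B(x)$ gives
\[
\frac{1}{n}\binom{n}{|\lambda|+1}\binom{kn}{\ell(\lambda)}\frac{\ell(\lambda)!}{m_\lambda},
\]
the number ending in a diagonal step. Since all the genuine work was done in the theorem via the Lagrange Inversion Formula, there is no real obstacle here; the only points requiring care are the index restriction $n\ge 1$ (the constant term $1$ in each series records the trivial length-$0$ path and is excluded) and the fact that the total count of large paths comes from the \emph{product} $A(x)B(x)$, reflecting the decomposition of an \nk forest into its two constituent rooted trees, rather than from a single factor.
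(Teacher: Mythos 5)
Your proposal is correct and follows exactly the paper's (implicit) route: the corollary is simply the extraction of the $\T^{\lambda}x^{n}$ coefficients from the formulas for $A(x)B(x)$ and $B(x)$ in the preceding theorem, combined with the combinatorial interpretation of those coefficients established in the lemma following Lemma~\ref{forestbijection}. Your added remarks on the linear independence of the monomials $\T^{\lambda}$ and on why the total count comes from the product $A(x)B(x)$ are sound and merely make explicit what the paper leaves unstated.
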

\section{Generalizing to Subsets of \texorpdfstring{$[k]$}{}}
\label{general}
In the original definition of \krfs paths, a diagonal step is only allowed to go from the line $y = kj+r-1$ to the line $y=kj+r$ for some $j$. In this section, we generalize the definition and enumeration results to paths for which diagonal steps are allowed to occur at a specific subset of residues $\pmod{k}$.
\begin{defn}
For a subset $S \subseteq [k]$, a \textit{large $(k,S)$-Fuss-\Sch path} of length $n$ is defined to be a path from $(0,0)$ to $(n,kn)$ using east steps, north steps, and diagonal steps such that the path never crosses below the line $y =kx$ and the diagonal steps are only allowed to go from the line $y = kj+r-1$ to the line $y=kj+r$ for some $j$ if $r \in S$. 

A \textit{small $(k,S)$-Fuss-\Sch path} is a large $(k,S)$-Fuss-\Sch path such that no diagonal step touches the main diagonal $y=kx$. 

The \textit{type} of such a path is the partition with parts given by maximal consecutive runs of east steps.
\end{defn}

We may prove a sequence of analogous results for $(k,S)$-Fuss-\Sch paths. Because of the similarity, some details are omitted.
\begin{defn}
Given a subset $S\subseteq [k]$ such that $|S|=d\le k$, we define a subset $S'\subseteq [k+d]$ as follows:

Write the numbers $0,1,2,3,\ldots,k-1$ in order, and for each $r \in S$, place a marker after $k-r$ in the list. The subset $S'$ is given by the set of indices of markers in the length $k+d$ list (with first index $0$).
\end{defn}
The subset $S'$ corresponds to the subset of residue classes the sequence $s_i$ can take $\pmod{k+d}$ which correspond to diagonal steps.
\begin{defn}
For positive integers $n,k$ and a subset $S \subseteq [k]$ with $d=|S|$, a \textit{\nks forest} is a rooted plane forest with pre-order labeling defined as follows:
\begin{enumerate}
\item If a vertex $v$ has label $s$ with $s\equiv r \pmod{k+d}$ for some $r \in S'$, then $v$ either has $0$ or $k+d$ children.
\item If a vertex $v$ has label $s$ such that $s\not\equiv r$ for all $r \in S'$, then $v$ has $m(k+d)$ children for some nonnegative integer $m$.
\item 
\begin{itemize}
\item If $S$ contains $k$, then the forest has $n(k+d)+2$ vertices, $n(k+d)$ edges, and is the union of two rooted plane trees.
\item If $S$ does not contain $k$, then the forest is a rooted plane tree with $n(k+d)+1$ vertices and $n(k+d)$ edges.
\end{itemize}
\end{enumerate}
The type of an \nks forest is defined to be the partition formed by all parts $m$ (with repetition) such that some $v$ in the latter category has $m(k+1)$ children.

\end{defn}
\begin{lemma}
There is a type preserving bijection between large $(k,S)$-Fuss-\Sch paths and \nks forests. Small paths correspond to \nks forests in which the second tree is a single vertex, and paths ending with a diagonal step correspond to \nks forests in which the first tree is a single vertex. (In particular, if $k \not \in S$, the corresponding path is always small and never ends with a diagonal step.)
\end{lemma}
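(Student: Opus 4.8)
The plan is to adapt the proof of Lemma \ref{forestbijection} almost verbatim, the only genuinely new ingredient being the bookkeeping of residues modulo $k+d$ instead of modulo $k+1$ and the case distinction on whether $k \in S$. First I would biject a large $(k,S)$-Fuss-\Sch path to its height sequence exactly as before. In the rectangle from $(0,0)$ to $(n,kn)$, I would label the horizontal lines so that reading the labels of the $D$ and $E$ steps from right to left produces a nondecreasing sequence $0 \le s_1 \le s_2 \le \cdots \le s_n$. The key adjustment is the labeling scheme: since diagonals may now occur at each $r \in S$, every row $y = (n-i)k$ together with the $d-1$ extra positions coming from the allowed residues in $S$ must be assigned distinct label values, so that a block of $k+d$ consecutive integers is used per unit increase in the $x$-coordinate. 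The construction of $S'$ is precisely engineered so that the labels corresponding to diagonal steps are exactly those congruent to some $r \in S' \pmod{k+d}$; I would verify this by checking that placing a marker after $k-r$ for each $r \in S$ inserts the diagonal-eligible label at the correct height within each block.

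Next I would establish that the two defining inequalities on the sequence translate correctly. The condition $s_i \le (i-1)(k+d) + c$ (for the appropriate constant $c$ depending on whether $k \in S$) encodes that the path stays nonstrictly above $y = kx$, and the condition that a value $s_i \equiv r \pmod{k+d}$ with $r \in S'$ is never repeated encodes that a diagonal step occupies a single eligible position. These are the direct analogues of the conditions $s_i \le (i-1)(k+1)+1$ and non-repetition of values $\equiv 1 \pmod{k+1}$ in Lemma \ref{forestbijection}, and the verification is the same counting-of-available-lines argument. Conversely, any nondecreasing sequence obeying these two constraints reconstructs a unique path, so this half is a bijection.

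The heart of the argument is then the bijection between such sequences and \nks forests, which runs through the identical algorithm: start with the appropriate number of isolated roots, then for each maximal run of $m$ equal terms $j$ attach $m(k+d)$ children to the vertex labeled $j$ and re-increment the pre-order labels. Because a multiple of $k+d$ children is added at each step, the residues $\pmod{k+d}$ of all labels are preserved, so the partition of vertices into the two categories of Definition of \nks forests is stable throughout; this guarantees the child-count conditions are respected. The case distinction on $k \in S$ enters only in the third defining property of the forest, dictating whether one starts with two roots (labeled so as to realize both an $r \in S'$ root and a non-$r$ root) or a single root. I would confirm that when $k \notin S$, the label $0$ is never an eligible diagonal residue, which forces the first tree to carry everything and makes the two special correspondences (small paths, paths ending in a diagonal) degenerate exactly as the parenthetical claims.

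The main obstacle I anticipate is purely combinatorial precision rather than conceptual difficulty: getting the $S \mapsto S'$ translation exactly right, including the off-by-one indexing between the height labels within a block of size $k+d$ and the marker positions in the length $k+d$ list, and correctly tracking the constant $c$ in the upper bound $s_i \le (i-1)(k+d)+c$ in the two cases. Once the labeling is pinned down so that diagonal-eligible rows match residues in $S'$, every remaining step — preservation of residues under child-addition, the availability of the required labels when a vertex gains children, and the identification of type — transfers directly from Lemma \ref{forestbijection} with $k+1$ replaced by $k+d$, so I would present those steps briefly and concentrate the exposition on the residue bookkeeping and the $k \in S$ versus $k \notin S$ dichotomy.
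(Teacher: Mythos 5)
Your proposal is correct and follows essentially the same route as the paper, which itself simply says the proof is ``almost the same as that of Lemma \ref{forestbijection}'' and records the two bounds $s_i \le (i-1)(k+d)$ when $k \notin S$ versus $s_i \le (i-1)(k+d)+1$ when $k \in S$ to explain the one- versus two-root distinction. Your expanded discussion of the $S \mapsto S'$ residue bookkeeping is more detailed than the paper's own proof but entirely consistent with it.
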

\begin{proof}
The proof is almost the same as that of Lemma \ref{forestbijection}. Note that if $k \not \in S$, the corresponding sequence $s_i$ satisfies $s_i \le (i-1)(k+d)$, while if $k \in S$, $s_i$ satisfies $s_i \le (i-1)(k+d)+1$. This accounts for the difference in number of beginning vertices.
\end{proof}
\begin{defn}
A \textit{$j$-rooted \nks tree} is a rooted plane tree labeled in pre-order $\pmod{k+d}$ and starting with $j$ at the root, such that 
\begin{enumerate}
\item If a vertex $v$ has label $s$ with $s \equiv r \pmod{k+d}$ for some $r \in S'$, then $v$ has either $0$ or $k+d$ children.
\item If a vertex $v$ has label $s$ such that $s \not\equiv r$ for all $r \in S'$, then $v$ has $m(k+d)$ children for some nonnegative integer $m$.
\end{enumerate}
The type of a $j$-rooted \nks tree is as in previous definitions.
\end{defn}

Now we see that an \nks forest is the union of a $0$-rooted $(n_1,k,S)$-valid tree and a $1$-rooted $(n_2,k,S)$-valid tree for some $n_1+n_2=n$ when $k \in S$, and when $k \not \in S$, it is simply a $0$ rooted \nks tree.

\begin{defn}
Let
\[
A_S(x) = \sum_{T}\T^{\lambda} x^{\nu(T)}
\]
where the sum is over all $j$-rooted \nks trees $T$ for some fixed $j\not \in S'$. Let
\[
B_S(x) = \sum_{T} \T^{\lambda} x^{\nu(T)}
\]
where the sum is over all $j$-rooted \nks trees $T$ for some fixed $j \in S'$. (If there is no $j \in S'$, then $B_S(x)=1$. As before, all elements of $S'$ and all elements of $[k+d]\backslash S'$ yield equal generating functions.)

Let 
\[
C_S(x) = x A_S(x)^{k}B_S(x)^{d}.
\]
\end{defn}

Now by our recursion,
\[
A_S(x) = 1+\Theta(C_S(x))
\]
and
\[
B_S(x) =1+C_S(x).
\]
\begin{thm}
We have
\begin{align*}
A_S(x) &= 1+\sum_{n\ge 1} \frac{1}{n}\sum_{\lambda}\frac{|\lambda|}{kn+1}\binom{dn}{n-|\lambda|}\binom{kn+1}{\ell(\lambda)}\frac{\ell(\lambda)!}{m_\lambda} \T^{\lambda} x^n\\
B_S(x)&= 1+\sum_{n\ge 1} \frac{1}{n}\sum_{\lambda} \binom{dn}{n-1-|\lambda|}\binom{kn}{\ell(\lambda)}\frac{\ell(\lambda)!}{m_\lambda}\T^{\lambda}x^{n}\\
A_S(x)B_S(x) &= 1 + \sum_{n \ge 1} \frac{1}{n}\sum_{\lambda} \left(\binom{dn}{n-1-|\lambda|}+\frac{|\lambda|}{kn+1}\binom{dn+1}{n-|\lambda|}\right)\binom{kn+1}{\ell(\lambda)}\frac{\ell(\lambda)!}{m_\lambda}\T^{\lambda}x^{n}.
\end{align*}
\end{thm}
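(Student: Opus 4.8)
The plan is to imitate the proof of the analogous theorem of Section \ref{enumeration} essentially verbatim, the sole structural change being that $B_S(x)$ now enters $C_S(x)$ with exponent $d$ rather than $1$. First I would combine the three stated relations $A_S=1+\Theta(C_S)$, $B_S=1+C_S$, and $C_S=xA_S^{k}B_S^{d}$ into a single functional equation
\[
C_S(x)=x\bigl(1+\Theta(C_S(x))\bigr)^{k}\bigl(1+C_S(x)\bigr)^{d},
\]
which exhibits $C_S$ in the form $f(x)=xG(f(x))$ with $f=C_S$ and $G(x)=(1+\Theta(x))^{k}(1+x)^{d}$. Since $G(0)=1\neq 0$, the Lagrange Inversion Formula applies.

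Next I would extract coefficients by choosing $H$ appropriately in each of the three cases. For $A_S=H(C_S)$ with $H(x)=1+\Theta(x)$, the formula gives
\[
[x^{n}]A_S(x)=\tfrac1n[x^{n-1}]\,\Theta'(x)(1+\Theta(x))^{kn}(1+x)^{dn},
\]
and I would finish by invoking the second identity of Lemma \ref{technical} with $a=kn$ and $b=dn$. For $B_S=H(C_S)$ with $H(x)=1+x$ (so $H'=1$) I would apply the first identity of Lemma \ref{technical}, again with $a=kn$, $b=dn$. These two steps reproduce the claimed formulas for $A_S$ and $B_S$ directly; the only difference from the earlier computation is that $(1+x)^{n}$ is replaced throughout by $(1+x)^{dn}$, i.e.\ $b=dn$ rather than $b=n$.

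The product case requires slightly more care. Writing $A_SB_S=H(C_S)$ with $H(x)=(1+\Theta(x))(1+x)$, I would differentiate to get $H'(x)=\Theta'(x)(1+x)+(1+\Theta(x))$, so that
\[
[x^{n}]A_S(x)B_S(x)=\tfrac1n[x^{n-1}]\Bigl(\Theta'(x)(1+\Theta(x))^{kn}(1+x)^{dn+1}+(1+\Theta(x))^{kn+1}(1+x)^{dn}\Bigr).
\]
Here I would apply the second identity of Lemma \ref{technical} with $(a,b)=(kn,dn+1)$ to the first summand and the first identity with $(a,b)=(kn+1,dn)$ to the second, then combine the two sums over $\lambda$ to obtain the stated coefficient $\binom{dn}{n-1-|\lambda|}+\frac{|\lambda|}{kn+1}\binom{dn+1}{n-|\lambda|}$.

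Since everything reduces to the two coefficient identities already packaged in Lemma \ref{technical}, there is no genuine obstacle here; the main point to get right is the bookkeeping of the two parameters $(a,b)$ in each application---in particular keeping track of when the exponent of $(1+x)$ is $dn$ versus $dn+1$ and when the base exponent of $(1+\Theta(x))$ is $kn$ versus $kn+1$ in the product case. As a sanity check I would verify the degenerate case $d=0$ (no diagonal steps, where $B_S=1$ and $C_S=xA_S^{k}$), in which the formulas must collapse to the Fuss-Catalan enumeration by type.
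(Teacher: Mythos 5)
Your proposal is correct and matches the paper's proof exactly: the paper also just applies the Lagrange Inversion Formula with $G(x)=(1+\Theta(x))^{k}(1+x)^{d}$ and Lemma \ref{technical} "in the same way as before," which is precisely the $(a,b)$ bookkeeping you carry out. Your parameter choices $(kn,dn)$, $(kn,dn+1)$, and $(kn+1,dn)$ all check out against the stated coefficients.
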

\begin{proof}
Apply the Lagrange Inversion Formula and Lemma \ref{technical} in the same way as before.
\end{proof}
\begin{cor}
For a set $S$ not containing $k$, the number of large $(k,S)$-Fuss-\Sch paths of type $\lambda$ is
\[
\frac{1}{n}\frac{|\lambda|}{kn+1}\binom{dn}{n-|\lambda|}\binom{kn+1}{\ell(\lambda)}\frac{\ell(\lambda)!}{m_\lambda}.
\]
For a set $S$ containing $k$, the number of small $(k,S)$-Fuss-\Sch paths of type $\lambda$ is the same quantity. The number of large paths ending in a diagonal step is
\[
\frac{1}{n}\binom{dn}{n-1-|\lambda|}\binom{kn}{\ell(\lambda)}\frac{\ell(\lambda)!}{m_\lambda}.
\]
The total number of large $(k,S)$-Fuss-\Sch paths of type $\lambda$ for a set $S$ containing $k$ is
\[
\frac{1}{n} \left(\binom{dn}{n-1-|\lambda|}+\frac{|\lambda|}{kn+1}\binom{dn+1}{n-|\lambda|}\right)\binom{kn+1}{\ell(\lambda)}\frac{\ell(\lambda)!}{m_\lambda}.
\]
\end{cor}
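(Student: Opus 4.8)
The plan is to read off all four formulas directly as coefficients of the series $A_S(x)$, $B_S(x)$, and $A_S(x)B_S(x)$ produced by the preceding theorem, exactly mirroring the way the corollary of Section~\ref{enumeration} was deduced from its theorem. The only genuine content is to pin down, via the $(k,S)$-analogue of Lemma~\ref{forestbijection} stated above, which class of path corresponds to which generating function; once that dictionary is fixed, every closed form is immediate.

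First I would dispose of the case $k \notin S$. Here the bijection lemma asserts that every large $(k,S)$-Fuss-\Sch path is automatically small and that its associated forest is a single $0$-rooted \nks tree. Since the marker construction never produces a marker at index $0$, we have $0 \notin S'$, so such trees are enumerated by $A_S(x)$. Hence the number of large (equivalently, small) paths of type $\lambda$ and size $n$ is $[\T^{\lambda}x^{n}]A_S(x)$, which is precisely the first displayed formula.

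Next I would treat $k \in S$. Here an \nks forest is an ordered pair consisting of a $0$-rooted tree and a $1$-rooted tree, and both the size statistic $\nu$ and the type are additive across the two trees, so the forest generating function factors as $A_S(x)B_S(x)$. The combinatorial point to verify is that the marker placed for $r=k$ sits immediately after the entry $k-k=0$, hence at index $1$ of the combined list, giving $1 \in S'$ while $0 \notin S'$; this identifies the first factor as $A_S$ and the second as $B_S$. Then the total count of large paths of type $\lambda$ is $[\T^{\lambda}x^{n}]A_S(x)B_S(x)$. A path is small exactly when its second tree is a single vertex, i.e. the legitimate $1$-rooted tree of empty type with $\nu=0$ contributing the constant term of $B_S$, so small paths are counted by $A_S(x)$ alone; dually, a path ends in a diagonal exactly when its first tree is a single vertex, contributing the constant term of $A_S$, so these are counted by $B_S(x)$. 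Extracting the three coefficients from the preceding theorem yields the remaining three formulas.

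The calculation itself is routine, so the only place I expect a (modest) obstacle is in nailing down this dictionary with care: confirming that the single-vertex trees are genuine \nks trees contributing the constant term $1$, that $\nu$ and the type really are additive under the forest product so that the generating function factors, and above all that the marker construction forces $0 \notin S'$ and $1 \in S'$ whenever $k \in S$, so that the factorization $A_S(x)B_S(x)$ assigns the correct factor to each of the two trees.
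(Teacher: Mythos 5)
Your proposal is correct and follows exactly the route the paper intends: the corollary is read off as the $\T^{\lambda}x^n$ coefficients of $A_S(x)$, $B_S(x)$, and $A_S(x)B_S(x)$ from the preceding theorem, with the dictionary between path classes and generating functions supplied by the $(k,S)$-analogue of Lemma \ref{forestbijection}. Your extra care in verifying that $0\notin S'$ and $1\in S'$ when $k\in S$ (so that the two trees of the forest are correctly matched to the factors $A_S$ and $B_S$) is a detail the paper leaves implicit, and it checks out.
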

Interestingly, these expressions depend only on the cardinality of $S$.
\section{Conclusion}
In recent years, much work has been done on enumerating paths of a fixed type, including Dyck paths, \Sch paths, and Fuss-Catalan paths (\cite{Liaw98,An14,Park16,Armstrong09}). An, Jung, and Kim (\cite{An17}) enumerated the number of small \kkfs paths of type $\lambda$ and posed the question of enumerating large \kkfs paths of type $\lambda$, which is achieved in the present paper through the use of generating functions. Furthermore, we enumerate the number of large \kkfs paths of type $\lambda$ ending on a diagonal step and generalize Fuss-\Sch paths to \kkfs paths for any subset $S\subseteq[k]$. The problem of enumeration by type here falls to the same techniques.

For future work, we would be interested in a combinatorial or bijective proof of the new formulae presented here, including the number of large \kkfs paths of type $\lambda$, the number of large \kkfs paths of type $\lambda$ ending on a diagonal step, and the number of $(k,S)$-Fuss-\Sch paths of type $\lambda$ of all three varieties (large, small, and ending on a diagonal step).

\section{Acknowledgments}
This work was supported by NSF grant DMS-1659047 as part of the Duluth Research Experience for Undergraduates (REU). The author is grateful to Benjamin Gunby for his help in the revision process as well as Joe Gallian for his helpful feedback and for administering the research program.

\bibliographystyle{plain}
\bibliography{anc/fussschroder}

\end{document}